\newcommand{\lr}[1]{\left(#1\right)}
\newcommand{\sumstar}{\sideset{}{^*}\sum}
\newcommand{\sumd}{\sideset{}{^d}\sum}
\newcommand{\N}{\mathcal{N}}
\renewcommand{\mod}[1]{~\pr{\textnormal{mod}~#1}}
\newtheorem{thm}{Theorem}[section]
\newtheorem{prop}[thm]{Proposition}
\newtheorem{lemma}[thm]{Lemma}
\theoremstyle{remark}
\newtheorem{rem}{Remark}
\newtheorem{remark}{Remark}
\newtheorem{rem*}{Remark}
\newcommand{\pr}[1]{\left( #1\right)}
\newcommand{\bfrac}[2]{\left(\frac{#1}{#2}\right)}
\def\sumstar{\operatornamewithlimits{\sum\nolimits^*}}
\newcommand{\sumtwo}{\operatorname*{\sum\sum}}
\newcommand{\sumthree}{\operatorname*{\sum\sum\sum}}
\newcommand{\comment}[1]{}
\let\originalleft\left
\let\originalright\right
\renewcommand{\left}{\mathopen{}\mathclose\bgroup\originalleft}
\renewcommand{\right}{\aftergroup\egroup\originalright}
\numberwithin{equation}{section}
\begin{document}

\title{Moment of Derivatives of Quadratic Twists of Modular $L$-functions}

\author{Zijie Zhou}

\begin{abstract}
We prove an asymptotic for the moment of derivatives of quadratic twists of two distinct modular $L$-functions. This was previously known conditionally on GRH by the work of Ian Petrow \cite{Petrow}.
\end{abstract}

\maketitle
\section{Introduction}

Moments of $L$-functions are central objects in the study of analytic number theory. Motivated by the Birch-Swinnerton-Dyer (BSD) conjecture, we want to understand the behavior of the modular $L$-functions at the central value. Analytic methods have been used successfully to study the behavior of the central value in some families of modular $L$-functions; one particular interesting family is that of quadratic twists of modular $L$-functions. According to the BSD conjecture, moments of quadratic twists of modular $L$-functions at the central value can provide crucial information about the distribution of ranks of associated twists of elliptic curves.

Before stating our main result, we introduce some notation and recall some standard facts. Let $f$ be a Hecke new eigenform for the group $\Gamma_0(q)$ of even weight $\kappa$ with trivial central character. It has Fourier expansion
\begin{align*}
f(z) = \sum_{n} \lambda_f(n) n^{(\kappa-1)/2}\exp(2\pi inz).
\end{align*}
The $L$-function associated with $f$ is given by
\begin{align*}
    L(s,f):= \sum_{n}\frac{\lambda_f(n)}{n^s} = \prod_{p\nmid q} \Bigg(1-\frac{\lambda_f(p)}{p^s}+\frac{1}{p^{2s}}\Bigg)^{-1}\prod_{p\mid q} \lr{1-\frac{\lambda_f(p)}{p^s}}^{-1},
\end{align*}
for $\text{Re}(s)>1$. Here, the Hecke eigenvalues of $f$ are all real, and hence $f$ is self-dual. Let $d$ be a fundamental discriminant coprime to $q$ and let $\chi_d(\cdot)=\bfrac{d}{\cdot}$ be a primitive quadratic character of conductor $|d|$. Then $f\otimes\chi_d$ is a newform on $\Gamma(q|d|^2)$ and the twisted $L$-function is defined by
\begin{align*}
L(s,f\otimes \chi_d) := \sum_{n} \frac{\lambda_f(n)\chi_d(n)}{n^s} = \prod_{p\nmid qd}\left(1-\frac{\lambda_f(p)\chi_d(p)}{p^s}+\frac{1}{p^{2s}}\right)^{-1} \prod_{p\mid q} \left(1-\frac{\lambda_f(p)\chi_d(p)}{p^s}\right)^{-1},
\end{align*}
for $\text{Re}(s)>1$. It satisfies the functional equation
\[
\Lambda(s,f\otimes \chi_d) = i^{\kappa}\eta\chi_d(-q)\Lambda(1-s,f\otimes \chi_d)
\]
with
\[
\Lambda(s,f\otimes\chi_d) := \bfrac{|d|\sqrt{q}}{2\pi }^s\Gamma\left(s+\frac{\kappa-1}{2}\right)L(s,f\otimes \chi_d).
\]
Here, $\eta$ is given by the eigenvalues of the Fricke involution, which is independent of $d$ and always $\pm 1$. We also denote the root number by $\omega(f\otimes \chi_d) := i^{\kappa} \eta\chi_d(-q)$.

When $\omega(f\otimes \chi_{d})=1$, an asymptotic formula for the second moment of $L(1/2,f\otimes \chi_d)$ was computed assuming the Generalized Riemann Hypothesis (GRH) by Soundararajan and Young \cite{SY}, which was proved unconditionally in Li's recent work \cite{Li}. Here, we apply their techniques to our problem when the sign of the functional equation is $-1$ and the derivative $L'(1/2,f\otimes \chi_d)$ is the correct object to study instead. For $\text{Re}(s)>1$, the derivative of the twisted $L$-function is
\[
L'(s,f\otimes \chi_d):= - \sum_{n} \frac{\lambda_f(n)\chi_d(n)\log(n)}{n^s}.
\]  
It also has the functional equation
\[
\Lambda'(s,f\otimes \chi_d) = -i^{\kappa}\eta\chi_d(-q)\Lambda'(1-s,f\otimes \chi_d).
\]
Based on Li's work \cite{Li}, Kumar, Mallesham, Sharma and Singh \cite{KMSS} proved the asymptotic of second moment of $L'(1/2,f\otimes\chi_{d})$ unconditionally, which was previously shown by Petrow \cite[Theorem 1]{Petrow} assuming GRH. In this paper, we study the moment of the derivatives of quadratic twists of two distinct modular $L$-functions, which was also shown by Petrow \cite[Theorem 2]{Petrow} assuming GRH.

We let $\sumstar$ denote a sum over squarefree integers 
and $\sumd_{N}$ denote a dyadic sum over $N=2^{\ell}$ for integers $\ell\geq 1$. For convenience, we restrict the modulus to be of the form $8d$ where $d$ is odd and squarefree. Now we state the main result
\begin{thm}\label{thm:One}
Let $f,g$ be distinct normalized cuspidal Hecke newforms with trivial central character, odd levels $q_1,q_2$ and even weights $\kappa_1,\kappa_2$ (we let $Q=q_1q_2$). Let $F$ be a smooth, nonnegative function compactly supported on $[1/2,2]$, we have
\begin{align} \label{eqn:result}
\sumstar_{\substack{(d,2Q)=1\\ \omega(f\otimes \chi_{8d})=-1\\ \omega(g\otimes \chi_{8d})=-1}} L'(1/2,f\otimes\chi_{8d})L'(1/2,g\otimes \chi_{8d})F\left(\frac{8d}{X}\right) = C_0 X (\log X)^2+C'X \log X + O(X(\log\log X)^6).
\end{align}
In the above, we have
\begin{align*}
C_0 & = \frac{\check{F}(0)L(1,\text{sym}^2f)L(1,f\otimes g)L(1,\text{sym}^2g)Z^{*}(0,0)}{2\pi^2},
\end{align*}
and $C'$ is some explicit constant that depends on $f,g$ and $F$. Here, $L(s,f\otimes g)$ is the Rankin-Selberg convolution of two modular forms $f,g$ and $Z^*(u,v)$ is a holomorphic function (\ref{eqn:mainterm}) defined in $\mathrm{Re}(u),\mathrm{Re}(v)\geq -1/4+\epsilon$, depending on $f$ and $g$, given by a sum of four absolutely convergent Euler products and uniformly bounded in $u,v$ where it converges, and $\check{F}$ is a Fourier-type transform of $F$ defined as in (\ref{eqn:fouriertype1}).
\end{thm}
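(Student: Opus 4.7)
The plan is to adapt the Soundararajan--Young \cite{SY}/Li \cite{Li}/Kumar--Mallesham--Sharma--Singh \cite{KMSS} framework to the product of two \emph{distinct} twisted derivatives. I would begin with an approximate functional equation writing, on the locus $\omega(f\otimes\chi_{8d})=\omega(g\otimes\chi_{8d})=-1$,
\begin{equation*}
L'(1/2,f\otimes\chi_{8d})L'(1/2,g\otimes\chi_{8d}) \;=\; 2\sum_{m,n}\frac{\lambda_f(m)\lambda_g(n)\chi_{8d}(mn)\log m\,\log n}{\sqrt{mn}}\,W\!\left(\frac{mn}{(8d)^2}\right),
\end{equation*}
for a smooth cut-off $W$ built from the Gamma factors and carrying additional $\log$-factors coming from differentiating the completed functional equation twice at $s=1/2$. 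The sign condition is then detected by expanding $\tfrac14(1-\omega(f\otimes\chi_{8d}))(1-\omega(g\otimes\chi_{8d}))$, which introduces auxiliary quadratic characters at $-q_1,-q_2$ that can be absorbed into $\chi_{8d}$.

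Second, I would swap orders of summation and apply Poisson summation to the inner sum over squarefree $d$, after detecting squarefreeness by $\sum_{a^2\mid d}\mu(a)$. The $d$-sum, taken in residue classes modulo $8mna^2$, splits into a zero-frequency contribution and non-zero frequencies carrying quadratic Gauss-sum coefficients.

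Third, the main term comes from the zero-frequency piece and is extracted by residue calculus on a two-variable Dirichlet series in shifts $(u,v)$ whose Euler product at good primes factors through $L(1+2u,\sym^2 f)$, $L(1+2v,\sym^2 g)$ and $L(1+u+v,f\otimes g)$, together with the holomorphic correction $Z^{*}(u,v)$. Applying $\partial_u\partial_v$ at $(u,v)=(0,0)$ to account for the two logarithms, and using that $L(s,f\otimes g)$ is entire since $f\ne g$, the residue produces the $X(\log X)^2$ and $X\log X$ terms with constants $C_0$ and $C'$ as in the theorem statement; the $X(\log X)^2$ coefficient matches $C_0$ precisely because the triple product $L(1,\sym^2 f)L(1,\sym^2 g)L(1,f\otimes g)$ arises naturally from the decoupling at the central line.

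The main obstacle is the non-zero frequency contribution, where one cannot appeal to GRH as in Petrow \cite{Petrow}. Following Li \cite{Li} and KMSS \cite{KMSS}, I would separate the dual variable by sign and parity, apply the functional equation for $L(s,\chi_{8k})$ to shorten the resulting character sum, and then estimate the off-diagonal via a second Poisson summation combined with Rankin--Selberg-type mean-square bounds for $\sum_m\lambda_f(m)\chi(m)V(m/Y)$. The distinctness of $f$ and $g$ forces a more delicate treatment of the bilinear $(m,n)$-structure and of the local Euler factors at primes dividing $Q$, but the analytic backbone is identical to KMSS and yields the stated $O(X(\log\log X)^6)$ error after the standard Perron/contour-shift bookkeeping.
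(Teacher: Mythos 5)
Your high-level plan is on the right track, but there is a genuine gap at the single most technically delicate point of the argument, and it is precisely the point the paper is most careful to address.

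You write that for the off-diagonal/tail contributions ``the analytic backbone is identical to KMSS.'' That is exactly what does \emph{not} work here. KMSS split the Dirichlet polynomial at a parameter $M$, applied Cauchy--Schwarz over \emph{all} dyadic ranges of the tail, and absorbed an extra factor of $(\log X)^2$ into the error; this is harmless for them because their main term is of size $X(\log X)^3$. In the two-form problem the main term is only $X(\log X)^2$, so a tail bound of size $X(\log X)^{2+\epsilon}$ is fatal (this is exactly the issue the paper identifies in the CKKKL attempt). The paper's actual proof avoids this by setting $M = X/(\log X)^{100}$ and treating the tail $\mathcal{B}_f\mathcal{B}_g$ via a dyadic decomposition in \emph{both} variables $N_1,N_2$, then applying Poisson summation (not Cauchy--Schwarz) whenever at least one of $N_1,N_2$ is $\leq M$, and reserving Cauchy--Schwarz together with Li's mean-value estimate (Lemma \ref{lem:NonDyadicCut}) only for the regime $M < N_1,N_2 \leq X$, where there are merely $\asymp\log\log X$ dyadic scales per variable. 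That is what produces the error term $O(X(\log\log X)^6)$ (roughly $((\log\log X)^3)^2$ from Lemma \ref{lem:2B} squared). Your proposal contains no such mechanism; as stated, it would reproduce the $(\log X)^2$ loss.

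A secondary, smaller issue: the approximate functional equation you write down, with a joint cutoff $W(mn/(8d)^2)$ and explicit factors $\log m \log n$, does not match the form used in the paper (Petrow's Lemma \ref{lem:ApproximateFunctionalEquation}), which expresses each $L'(1/2,f\otimes\chi_{8d})$ individually as a difference of two Dirichlet polynomials in a single variable, with the logarithm appearing only as a double pole of the Mellin cutoff $W_Z$, and the sign condition encoded via the factor $(1 - i^{\kappa}\eta\chi_{8d}(-q))$ absorbed into $\mathcal{A}_f$. The single-variable form is what makes the $\mathcal{A}/\mathcal{B}$ decomposition at the threshold $M$ possible, and thus is not a cosmetic choice; your joint-cutoff version does not obviously admit the same truncation strategy. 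I would rework the starting point to the factorised form before attempting the tail estimate.
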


\begin{remark}
The combination of work of Kolyvagin \cite{V}, Gross and Zagier \cite{GZ}, and others, implies that if the analytic rank of an elliptic curve is equal to one, then so is the rank of that elliptic curve. By the modularity theorem, any elliptic curve $E$ over $\mathbb{Q}$ of conductor $N$, there is a weight $2$, level $N$ Hecke eigenform $f$ with $L(s,E) =L(s,f)$, where $L(s,E)$ is the Hasse-Weil $L$-function of the elliptic curve $E$. Our result then implies that for any fixed elliptic curves $E_1$ and $E_2$ over $\mathbb{Q}$ such that $L(s,E_1) = L(s,f)$ and $L(s,E_2) = L(s,g)$, there exist infinitely many fundamental discriminants $d$ such that the twists of the elliptic curves $E_1(d)$ and $E_2(d)$ are exactly of rank $1$.
\end{remark}

\section{Outline of Proof of Theorem \ref{thm:One}}
 Let $M = X/ (\log X)^{100}$, we let
\begin{align*}
\mathcal{A}_f  & \approx  \sum_{n\leq M} \frac{\lambda_f(n)\chi_{8d}(n)}{\sqrt{n}}W_f\bfrac{n}{M}  \\
\mathcal{B}_f &  \approx \sum_{M < n\leq X} \frac{\lambda_f(n)\chi_{8d}(n)}{\sqrt{n}}\Bigg(W_f\bfrac{n}{X} - W_f\bfrac{n}{M}\Bigg),
\end{align*}
where $W_f(x)$ is the smooth cut-off function defined as in (\ref{eqn:cutofffunction}). With the notation above, the approximate functional equation gives us the decomposition
\begin{align*}
\sumstar_{\substack{(d,2Q)=1\\ d\asymp X}} L'(1/2,f\otimes\chi_{8d})L'(1/2,g\otimes\chi_{8d}) = \sumstar_{\substack{(d,2Q)=1\\ d\asymp X}}  \mathcal{A}_f\mathcal{A}_g +\sumstar_{\substack{(d,2Q)=1\\ d\asymp X}} \mathcal{A}_f\mathcal{B}_g+ \sumstar_{\substack{(d,2Q)=1\\ d\asymp X}}  \mathcal{B}_f\mathcal{A}_g+\sumstar_{\substack{(d,2Q)=1\\ d\asymp X}} \mathcal{B}_f\mathcal{B}_g.
\end{align*}

When one of the $n_i$ sums is small, we can apply Poisson summation to obtain both diagonal and off-diagonal terms. The diagonal term gives us the main contributions while we need to bound the off-diagonal terms with Lemma \ref{lem:NonDyadicCut} from \cite{Li}. The main challenge is to bound the tail
\begin{align*}
\sumstar_{\substack{(d,2Q)=1\\ d\asymp X}}\mathcal{B}_f\mathcal{B}_g
\end{align*}
since that is when both $n_i$ sums are large. Our idea behind bounding the tail is to insert a smooth partition of unity for each $n_i$ sum which leads us to analyze the following
\begin{align} \label{eqn:Dyadic}
\sumd_{N_1,N_2} \sumstar_{\substack{(d,2Q)=1\\ d\asymp X}} \sumtwo_{n_1,n_2} \frac{\lambda_f(n_1)\lambda_g(n_2)}{\sqrt{n_1n_2}}\chi_{8d}(n_1n_2)\Bigg(W_f\bfrac{n_1}{X} - W_f\bfrac{n_1}{M}\Bigg)\Bigg(W_g\bfrac{n_2}{X} - W_g\bfrac{n_2}{M}\Bigg)G\bfrac{n_1}{N_1}G\bfrac{n_2}{N_2},
\end{align}
where $G(x)$ is a smooth real-valued function that satisfies (\ref{eqn:G}). Next we truncate each dyadic sum $N_i$ in three different ranges: $N_i\leq M, M<N_i\leq X$ and $X<N_i$. 

In the work of \cite{KMSS}, the authors apply Cauchy-Schwarz inequality for all ranges which caused them to lose an extra power of $(\log X)^2$ in the bound for $\sumstar_{d\asymp X} \mathcal{B}_f^2$. But such loss is acceptable given the main term is $X(\log X)^3$ in their work. This is illustrated in the table below.
        
\begin{align*}
            \begin{array}{|c|c|c|c|}
            \hline
                & N_1\leq M & M<N_1\leq X & X<N_1 \\
                \hline
          N_2\leq M & \text{C.S.} & \text{C.S.} & \text{C.S.} \\
                \hline
    M <N_2\leq X  & \text{C.S.} & \text{C.S.} & \text{C.S.} \\
                \hline
            X<N_2  &   \text{C.S.} & \text{C.S.} & \text{C.S.}  \\
            \hline
            \end{array}
  \end{align*}.




On the other hand, our approach takes advantage when one of the dyadic sums is small by using Poisson summation instead of Cauchy-Schwarz ineqaulity. For the ranges $M<N_i\leq X$, we observe that the length is small in the $\log$ scale. That is, there are $\log\log X$ dyadic intervals in that range so we can apply Cauchy-Schwarz inequality and Lemma \ref{lem:NonDyadicCut} to obtain the bound. For the range $X<N_i$, the smooth decay from the cut-off function will also give us small contribution. As a result, we are able to save a power of $(\log X)^2$ at the end. Our approach is illustrated in the table below.
\begin{align*}
            \begin{array}{|c|c|c|c|}
            \hline
                & N_1\leq M & M <N_1\leq X & X<N_1 \\
                \hline
          N_2\leq M & \textcolor{red}{P.S.} & \textcolor{red}{P.S.} & \textcolor{red}{P.S.} \\
                \hline
    M <N_2\leq X  & \textcolor{red}{P.S.} & \text{C.S.} & \text{C.S.} \\
                \hline
            X<N_2  &   \textcolor{red}{P.S.} & \text{C.S.} & \text{C.S.}  \\
            \hline
            \end{array}
  \end{align*}.
  \begin{remark}
  Alternatively, one can also redefine $\mathcal{A}_f,\mathcal{B}_f$ in the following ways
  \begin{align*}
  \mathcal{A}_f & := \sumd_{N\leq M} \sum_{n} \frac{\lambda_f(n)\chi_{8d}{(n)}}{\sqrt{n}} G\bfrac{n}{N}W_f\bfrac{n}{M}, \\
  \mathcal{B}_f & := L'(1/2,f\otimes\chi_{8d}) - \mathcal{A}_f.
  \end{align*}
  This simplifies the bound for $\sumstar_{d} \mathcal{B}_f\mathcal{B}_g$ but it takes more work to extract the main terms.
  \end{remark}

{\bf Acknowledgments.} I would like to thank my advisor Xiannan Li for the fruitful conversations and helpful comments. I also would like to thank the anonymous referee for the detailed feedback.

\section{Preliminary Results}
By the approximate functional equation, $L'(1/2,f\otimes\chi_{d})$ can be expressed in terms of a Dirichlet polynomial. We record a standard result from Petrow \cite[Lemma~1]{Petrow}:
\begin{lemma}(Approximate functional equation). \label{lem:ApproximateFunctionalEquation}
Let $f$ be a $\lambda_f(1)=1$ normalized cuspidal newform on $\Gamma_0(q)$ with trivial central character and root number $\omega(f)= i^{\kappa}\eta$. Let $Z>0$ be an arbitrary real number parameter. Define the cut-off function
\begin{equation*}
W_Z(x): = \frac{1}{2\pi i}\int_{(3)}\frac{\Gamma(u+\kappa/2)}{\Gamma(\kappa/2)}\left(\frac{2\pi x}{Z\sqrt{q}}\right)^{-u}\frac{1-u\log Z}{u^2}\; du.
\end{equation*}
Then
\[
\sum_{n}\frac{\lambda_f(n)\chi_d(n)}{\sqrt{n}}W_Z\left(\frac{n}{|d|}\right)-i^{\kappa}\eta\chi_d(-q) \sum_{n}\frac{\lambda_f(n)\chi_d(n)}{\sqrt{n}}W_{Z^{-1}}\left(\frac{n}{|d|}\right) =
\begin{cases}
L'(1/2,f\otimes\chi_{d}), & \text{if} \;\omega(f\otimes\chi_d)=-1, \\
0, & \text{if} \;\omega(f\otimes\chi_d)=1.
\end{cases}
\]

\end{lemma}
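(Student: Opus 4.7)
The plan is to express each sum as a contour integral of the completed $L$-function $\Lambda(s,f\otimes\chi_d)$, then shift the contour and use the functional equation to collapse the difference of the two sums into a single residue at $u=0$. Substituting the Dirichlet series $L(1/2+u,f\otimes\chi_d)$ into the Mellin representation of $W_Z$ and swapping the order of summation and integration---legal on $\text{Re}(u)=3$ by absolute convergence and the rapid decay of $\Gamma(u+\kappa/2)$---gives
\begin{equation*}
\sum_n \frac{\lambda_f(n)\chi_d(n)}{\sqrt n}\,W_Z\!\left(\frac{n}{|d|}\right) = \frac{1}{c}\cdot\frac{1}{2\pi i}\int_{(3)} \Lambda(1/2+u,f\otimes\chi_d)\,Z^u\,\frac{1-u\log Z}{u^2}\,du,
\end{equation*}
where $c:=(|d|\sqrt q/(2\pi))^{1/2}\Gamma(\kappa/2)$. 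Replacing $Z$ by $Z^{-1}$ throughout shows that the $W_{Z^{-1}}$-sum equals the analogous expression with $Z^u(1-u\log Z)$ replaced by $Z^{-u}(1+u\log Z)$; call these two integrals $J_1$ and $J_2$.

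Next, shift the contour in $J_1$ from $\text{Re}(u)=3$ to $\text{Re}(u)=-3$. Since $f$ is cuspidal, $\Lambda(s,f\otimes\chi_d)$ is entire (the poles of $\Gamma(s+(\kappa-1)/2)$ being cancelled by the trivial zeros of $L$), so the only singularity in the strip is the double pole of $1/u^2$ at $u=0$. A short Taylor expansion yields $Z^u(1-u\log Z)=1+O(u^2)$, so the integrand equals $\Lambda(1/2+u)/u^2 + O(1)$ in a neighborhood of the origin; the residue is therefore the coefficient of $u$ in $\Lambda(1/2+u)$, namely $\Lambda'(1/2,f\otimes\chi_d)$. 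In the integral on $\text{Re}(u)=-3$, substitute $u\mapsto -v$ and apply the functional equation $\Lambda(1/2-v,f\otimes\chi_d)=\omega\,\Lambda(1/2+v,f\otimes\chi_d)$ with $\omega=i^\kappa\eta\chi_d(-q)$; the kernel is carried to $Z^{-v}(1+v\log Z)/v^2$, so what remains is exactly $\omega J_2$. This gives the key identity $J_1-\omega J_2=\Lambda'(1/2,f\otimes\chi_d)$.

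To conclude, divide by $c$ and split on the sign of $\omega$. When $\omega(f\otimes\chi_d)=-1$, the functional equation forces $\Lambda(1/2,f\otimes\chi_d)=0$ and hence $L(1/2,f\otimes\chi_d)=0$, so the derivative of the archimedean prefactor drops out and $\Lambda'(1/2,f\otimes\chi_d) = c\,L'(1/2,f\otimes\chi_d)$, matching the claimed identity. When $\omega(f\otimes\chi_d)=+1$, differentiating the functional equation at $s=1/2$ gives $\Lambda'(1/2,f\otimes\chi_d)=0$, so both sides vanish. The only step that is not bookkeeping is justifying that the horizontal segments at height $T$ contribute $o(1)$ under the contour shift, which follows routinely from Stirling's formula and the convexity bound for $L(s,f\otimes\chi_d)$ combined with the $1/u^2$ damping; this is really the only place where any care is required, but it is entirely standard.
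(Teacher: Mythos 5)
Your proof is correct, and it is precisely the standard Mellin--transform and contour--shift argument that Petrow uses in \cite[Lemma 1]{Petrow}; the paper merely records the statement from there without reproving it. All the key steps check out: the identity $c=\Gamma(\kappa/2)\bigl(|d|\sqrt{q}/(2\pi)\bigr)^{1/2}$, the Taylor expansion $Z^u(1-u\log Z)=1-\tfrac{1}{2}(\log Z)^2 u^2+O(u^3)$ showing the residue at $u=0$ is $\Lambda'(1/2,f\otimes\chi_d)$, the substitution $u\mapsto -v$ (whose orientation reversal cancels the sign from $du=-dv$) combined with the functional equation to recover $\omega J_2$, and the final translation $\Lambda'(1/2)=c\,L'(1/2)$ when $\omega=-1$ (using $L(1/2)=0$) versus $\Lambda'(1/2)=0$ when $\omega=+1$.
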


For simplicity, we let $Z=1$ and $\gamma_f(u) := \frac{\Gamma(u+\kappa/2)}{\Gamma(\kappa/2)} \bfrac{2\pi}{\sqrt{q}}^{-u}$ throughout the paper. 
An application of M\"obius inversion will lead us to analyze a character sum in the form of
\begin{align*}
    \sum_{(d,2)=1} \chi_{8d}(n_1n_2)F\bfrac{8d}{X}.
\end{align*}
The following lemma is a refinement of Poisson summation from Soundararajan \cite[Lemma 2.6]{Snonvanishing}.
\begin{lemma}(Poisson Summation). \label{lem:PoissonSummation}
Let $F$ be a smooth function compactly support on the positive real numbers, and suppose that $n$ is an odd integer. Then
\[
\sum_{(d,2)=1} \bfrac{d}{n}F\bfrac{d}{Z} = \frac{Z}{2n}\bfrac{2}{n}\sum_{k\in\mathbb{Z}} (-1)^k G_k(n)\check{F}\bfrac{kZ}{2n},
\]
where 
\[
G_k(n) = \left(\frac{1-i}{2}+\bfrac{-1}{n}\frac{1+i}{2}\right)\sum_{a\mod n}\bfrac{a}{n}e\bfrac{ak}{n}
\]
is the Gauss-type sum, and
\begin{align} \label{eqn:fouriertype1}
\check{F}(y) = \int_{-\infty}^{\infty} (\cos(2\pi xy)+\sin(2\pi xy))F(x)\; dx
\end{align}
is a Fourier-type transform of $F$. While for $F$ supported on $[0,\infty)$, we also have
\begin{equation} \label{eqn:fouriertype}
\check{F}(y) =\frac{1}{2\pi i}\int_{(1/2)} \tilde{F}(1-s)\Gamma(s)(\cos+\mathrm{sgn}(y)\sin)\bfrac{\pi s}{2}(2\pi |y|)^{-s}\; ds,
\end{equation}
where 
\[
\tilde{F}(s)=\int_{0}^{\infty} F(x)x^{s-1}\; dx
\]
is the usual Mellin transform of $F$.
\end{lemma}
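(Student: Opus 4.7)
The plan is to apply the classical Poisson summation formula after slicing the sum into arithmetic progressions modulo $2n$. Since the Jacobi symbol $\bfrac{d}{n}$ depends only on $d \bmod n$, I would write each odd $d$ as $d = 2nm + r$ with $r$ ranging over the $n$ odd residues $\{1, 3, \ldots, 2n-1\}$ modulo $2n$. Because $F$ is compactly supported on $(0,\infty)$, the inner sum over $m$ extends harmlessly to all of $\mathbb{Z}$, so the standard Poisson formula gives
\begin{align*}
\sum_{m \in \mathbb{Z}} F\left(\frac{2nm + r}{Z}\right) = \frac{Z}{2n}\sum_{k \in \mathbb{Z}} \hat{F}\left(\frac{kZ}{2n}\right)\e{\frac{kr}{2n}},
\end{align*}
where $\hat{F}$ denotes the standard Fourier transform.

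Next I would evaluate the resulting exponential sum $T_k := \sum_{r\text{ odd mod }2n} \bfrac{r}{n}\e{kr/(2n)}$. Parameterizing $r = 2j + 1$ for $j \in \{0, \ldots, n-1\}$ and substituting $a \equiv 2j + 1 \pmod{n}$ followed by $a = 2b$, together with the identity $2\overline{2} \equiv n+1 \pmod{2n}$, yields the factorization
\begin{align*}
T_k = (-1)^k \bfrac{2}{n} \sum_{b \bmod n} \bfrac{b}{n} \e{kb/n}.
\end{align*}
This already produces the prefactor $\frac{Z}{2n}\bfrac{2}{n}$ and the ordinary Gauss sum appearing in the lemma, but with $\hat{F}$ in place of $\check{F}$.

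To convert $\hat{F}$ into $\check{F}$, I would pair the frequencies $k$ and $-k$, using the symmetry $\sum_b \bfrac{b}{n}\e{-kb/n} = \bfrac{-1}{n} \sum_b \bfrac{b}{n}\e{kb/n}$ obtained from the substitution $b \mapsto -b$. Both $\hat{F}(y) + \bfrac{-1}{n}\hat{F}(-y)$ and $\check{F}(y) + \bfrac{-1}{n}\check{F}(-y)$ collapse to a single sine or cosine integral (depending on the sign of $\bfrac{-1}{n}$), and a direct comparison shows they differ by exactly the scalar $c_n := \frac{1-i}{2} + \bfrac{-1}{n}\frac{1+i}{2}$. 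Absorbing $c_n$ into the ordinary Gauss sum yields $G_k(n)$ and completes the identity; the $k = 0$ term is harmless, because it vanishes automatically in the cases where $c_n \neq 1$, since $\bfrac{\cdot}{n}$ is then a nontrivial real character.

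For the Mellin representation (\ref{eqn:fouriertype}) of $\check{F}$, I would apply Mellin inversion to $\cos(2\pi xy)$ and $\sin(2\pi xy)$ using the classical identities $\int_0^\infty x^{s-1}\cos(2\pi x|y|)\, dx = (2\pi|y|)^{-s}\Gamma(s)\cos(\pi s/2)$ and its sine analogue, valid on $0 < \text{Re}(s) < 1$; swapping the order of integration on the line $\text{Re}(s) = 1/2$ is justified by the rapid decay of $\tilde{F}(1-s)$ along vertical lines, and the $\sgn(y)$ factor arises because $\sin$ is odd while $\cos$ is even in $y$. The main obstacle is the $k \leftrightarrow -k$ pairing step: verifying that the scalar $c_n$ emerges uniformly in both cases $\bfrac{-1}{n} = \pm 1$, and tracking the $k = 0$ term, requires a careful but short case analysis; the remaining steps are routine bookkeeping.
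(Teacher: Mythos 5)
Your proof is correct. The paper does not prove this lemma: it cites it as Lemma 2.6 of Soundararajan's \emph{Nonvanishing of quadratic Dirichlet $L$-functions at $s=1/2$} (together with Lemma 2.3 there for the explicit Gauss-sum evaluation). Your argument is a faithful reconstruction of that standard proof: Poisson summation in arithmetic progressions modulo $2n$, the evaluation $T_k = (-1)^k\bfrac{2}{n}\sum_{b\bmod n}\bfrac{b}{n}\e{kb/n}$, and the $k\leftrightarrow -k$ pairing that converts $\hat F$ to $c_n\check F$ with $c_n=\frac{1-i}{2}+\bfrac{-1}{n}\frac{1+i}{2}$. I checked the two case computations ($\bfrac{-1}{n}=\pm1$ giving the ratios $1$ and $-i$ respectively) and the $k=0$ term (vanishes unless $n$ is a perfect square, in which case $n\equiv 1\bmod 8$ forces $c_n=1$), and all are sound. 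One small stylistic point: the parameterization $r=2s+n$, $s\in\{0,\dots,n-1\}$, of the odd residues modulo $2n$ gives $\bfrac{r}{n}=\bfrac{2}{n}\bfrac{s}{n}$ and $\e{kr/(2n)}=(-1)^k\e{ks/n}$ immediately, which is somewhat cleaner than routing through the inverse of $2$ and the identity $2\overline{2}\equiv n+1\pmod{2n}$, though your version reaches the same $T_k$.
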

Furthermore, Soundararajan has computed $G_k(n)$ explicitly in \cite[Lemma 2.3]{Snonvanishing}.
\begin{lemma}\label{lem:GaussSum}
If $m$ and $n$ are relatively prime odd integers, then $G_k(mn) = G_k(m)G_k(n)$, and if $p^{\alpha}$ is the largest power of $p$ dividing $k$ (setting $\alpha=\infty$ if $k=0$), then
\[
G_k(p^{\beta}) =
\begin{cases}
0, & \text{if $\beta\leq \alpha$ is odd}, \\
\phi(p^{\beta}), & \text{if $\beta\leq \alpha$ is even}, \\
-p^{\alpha}, & \text{if $\beta = \alpha+1$ is even}, \\
\bfrac{kp^{-\alpha}}{p}p^{\alpha}\sqrt{p}, & \text{if $\beta= \alpha+1$ is odd}, \\
0, & \text{if $\beta\geq \alpha+2$}. \\
\end{cases}
\]
\end{lemma}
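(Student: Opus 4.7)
The plan is to prove the two assertions separately: first the multiplicativity $G_k(mn)=G_k(m)G_k(n)$ via the Chinese Remainder Theorem combined with quadratic reciprocity, and then the evaluation of $G_k(p^\beta)$ by cases on the parity of $\beta$ and the relative sizes of $\alpha:=v_p(k)$ and $\beta$.

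For multiplicativity, I would parametrize residues $a\pmod{mn}$ by pairs $(a_1\pmod m, a_2\pmod n)$ via CRT, writing $a\equiv a_1 n\overline{n}+a_2 m\overline{m}\pmod{mn}$ with $\overline{n}n\equiv 1\pmod m$ and $\overline{m}m\equiv 1\pmod n$. The Jacobi symbol factors as $\bfrac{a}{mn}=\bfrac{a_1}{m}\bfrac{a_2}{n}$ and the additive character as $e(ak/mn)=e(a_1 k\overline{n}/m)e(a_2 k\overline{m}/n)$. After the substitutions $a_1\mapsto n a_1$ and $a_2\mapsto m a_2$, the double sum decouples into a product of prime-power sums at the cost of a factor $\bfrac{n}{m}\bfrac{m}{n}=(-1)^{(m-1)(n-1)/4}$ coming from quadratic reciprocity. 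A short case analysis modulo $4$, noting that the prefactor $\varepsilon(n):=\frac{1-i}{2}+\bfrac{-1}{n}\frac{1+i}{2}$ equals $1$ when $n\equiv 1\pmod 4$ and $-i$ when $n\equiv 3\pmod 4$, then shows $\varepsilon(mn)=\varepsilon(m)\varepsilon(n)(-1)^{(m-1)(n-1)/4}$, so the reciprocity sign cancels and multiplicativity follows.

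For the explicit evaluation at $p^\beta$, I would split on the parity of $\beta$. If $\beta$ is even, $\bfrac{a}{p^\beta}$ is simply the indicator of $\gcd(a,p)=1$, so the character sum reduces to the Ramanujan sum $c_{p^\beta}(k)=\sum_{d\mid\gcd(k,p^\beta)}d\,\mu(p^\beta/d)$, which evaluates to $\phi(p^\beta)$ when $\beta\le\alpha$, to $-p^\alpha$ when $\beta=\alpha+1$ (forcing $\alpha$ odd), and to $0$ when $\beta\ge\alpha+2$. If $\beta$ is odd, I would group residues $a\pmod{p^\beta}$ by $b:=a\pmod p$: writing $a=b+pt$ with $t$ running modulo $p^{\beta-1}$, the inner sum over $t$ equals $p^{\beta-1}$ when $p^{\beta-1}\mid k$ and $0$ otherwise, so the full sum vanishes unless $\beta\le\alpha+1$. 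When $\beta\le\alpha$, the outer sum $\sum_{b=1}^{p-1}\bfrac{b}{p}=0$; when $\beta=\alpha+1$ (so $\alpha$ even), writing $k=p^\alpha k'$ and substituting $b\mapsto\overline{k'}b$ reduces the sum to $p^\alpha\bfrac{k'}{p}g(\chi_p)$, where $g(\chi_p)$ is the classical quadratic Gauss sum. A direct check that $\varepsilon(p^\beta)\cdot g(\chi_p)=\sqrt{p}$ in both residue classes of $p\pmod 4$ then yields $G_k(p^\beta)=\bfrac{kp^{-\alpha}}{p}p^\alpha\sqrt{p}$.

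The most delicate step is the bookkeeping in the multiplicativity argument, tracking the cancellation between the quadratic reciprocity sign and the twist from the $\varepsilon$-prefactor; however this reduces to a finite case check modulo $4$, so it is mechanical rather than conceptually hard. The main conceptual content lies in the reduction to the classical quadratic Gauss sum in the odd-$\beta$ case, which is where the factor $\sqrt{p}$ in the stated formula comes from.
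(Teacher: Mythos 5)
Your proof is correct, and since the paper does not prove this lemma — it simply cites it as Lemma 2.3 of Soundararajan's \emph{Annals} 2000 paper — your argument stands on its own. It follows essentially the same route as Soundararajan's original proof: CRT plus quadratic reciprocity for multiplicativity, and a parity split on $\beta$ (Ramanujan sum for even $\beta$, reduction to the classical Gauss sum for odd $\beta$) for the prime-power evaluation.

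Two minor points worth spelling out if you write this up fully. First, in the multiplicativity argument the twist factor is really $\bfrac{n}{m}\bfrac{m}{n}$ before reciprocity is invoked; it is fine to replace it by $(-1)^{(m-1)(n-1)/4}$ for the $\bmod\ 4$ case check, but since $\varepsilon(n)$ depends only on $n\bmod 4$ anyway, one can also verify $\varepsilon(mn)\bfrac{n}{m}\bfrac{m}{n}=\varepsilon(m)\varepsilon(n)$ directly in the four cases without naming reciprocity, which slightly shortens the write-up. Second, for even $\beta$ it is worth recording explicitly that $p^\beta\equiv 1\pmod 4$ so $\varepsilon(p^\beta)=1$, otherwise the reader may wonder where the prefactor went; you implicitly use this but do not say it. The odd-$\beta$ reconciliation $\varepsilon(p^\beta)\,g(\chi_p)=\sqrt{p}$ in both classes $p\equiv 1,3\pmod 4$ is handled correctly.
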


After applying Poisson summation, we obtain both diagonal and off-diagonal terms. While the diagonal term gives us the desired asymptotic, we will encounter the Dirichlet series in the form of 
\begin{align*}
Z(\alpha,\beta,\gamma;a,k_1,Q',\chi_b) := \sum_{\substack{k_2\geq 1\\ (k_2,2)=1}} \sumtwo_{\substack{(n_1n_2,2a)=1}} \frac{\lambda_f(n_1)\lambda_g(n_2)\chi_b(n_1n_2Q')}{n_1^{\alpha}n_2^{\beta}k_2^{2\gamma}} \frac{G_{k_1k_2^2}(n_1n_2Q')}{n_1n_2Q'}
\end{align*}
when bounding the off-diagonal terms. The series is a variation of $(3.10)$ in \cite{SY} and the following lemma is a refinement of \cite[Lemma 3.3]{SY}, we will provide a proof of it for the sake of completeness.

\begin{lemma}\label{lem:DirichletSeries}
With the notation above, let $f_{b} := f\otimes \chi_{b},g_{b}: =g\otimes \chi_{b}$ be the twisted newforms. Let $k_1$ be squarefree and
\[
m(k_1) =
\begin{cases}
k_1, & \quad\text{if $k_1\equiv 1\pmod{4}$}, \\
4k_1, &\quad \text{if $k_1\equiv 2,3\pmod{4}$}. \\
\end{cases}
\]
Then for any positive integers $a, Q'$, we have
\[
Z(\alpha,\beta,\gamma; a,k_1,Q',\chi_b) = L(1/2+\alpha,f_{b}\otimes \chi_{m(k_1)}) L(1/2+\beta,g_{b}\otimes \chi_{m(k_1)})Y(\alpha,\beta,\gamma;a,k_1,Q',\chi_b)
\]
with
\[
Y(\alpha,\beta, \gamma;a,k_1,Q',\chi_b) = \frac{Z^{*}(\alpha,\beta,\gamma;a,k_1,Q',\chi_b)}{L(1+2\alpha,\text{sym}^2f) L(1+2\beta,\text{sym}^2g) L(1+\alpha+\beta,f\otimes g)},
\]
 where $Z^{*}(\alpha,\beta,\gamma;a,k_1,Q',\chi_b)$ is analytic in the region $\mathrm{Re}(\alpha),\mathrm{Re}(\beta)\geq -\delta/2$ and $\mathrm{Re}(\gamma)\geq 1/2+\delta$ for any $0<\delta <1/3$. Moreover, in the same region, $Z^{*}(\alpha,\beta,\gamma;a,k_1,Q',\chi_b)\ll \tau(a)$ where the implied constant may depend only on $\delta, f$ and $g$. Furthermore, we can represent
 \begin{align*} 
 Y(\alpha,\beta,\gamma;a,k_1,Q',\chi_b) = \sumthree_{r_1,r_2,r_3} \frac{C(r_1,r_2,r_3)}{r_1^{\alpha}r_2^{\beta}r_3^{2\gamma}}
 \end{align*}
as a Dirichlet series with some coefficients $C(r_1,r_2,r_3)$. 
\end{lemma}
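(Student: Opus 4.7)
The plan is to realize $Z(\alpha,\beta,\gamma;a,k_1,Q',\chi_b)$ as an Euler product over odd primes by combining the Hecke multiplicativity of $\lambda_f(n_1)\lambda_g(n_2)$, the complete multiplicativity of $\chi_b$, and the multiplicativity of $G_{k_1k_2^2}(\cdot)$ in its second argument provided by Lemma~\ref{lem:GaussSum}. Since $k_2$ is an independent summation variable, I would split the $k_2$-sum into local factors at each odd prime as well, so that the local factor at a prime $p$ depends only on $v_p(k_1), v_p(Q'), v_p(a)$ and on the local triple sum over $v_p(n_1), v_p(n_2), v_p(k_2)$.

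At the generic primes $p\nmid 2aQ'k_1$, Lemma~\ref{lem:GaussSum} gives $G_{k_1k_2^2}(p^\beta)=1$ at $\beta=0$, equals $\bfrac{k_1k_2^2}{p}\sqrt{p}=\chi_{m(k_1)}(p)\sqrt{p}$ at $\beta=1$ when $p\nmid k_2$ (by the design of $m(k_1)$ and the fact that $(k_2^2/p)=1$), and vanishes in the higher $\beta$ cases. A direct calculation then shows the local factor at $p$ factorizes as
\begin{equation*}
L_p(1/2+\alpha, f_b\otimes\chi_{m(k_1)})\,L_p(1/2+\beta, g_b\otimes\chi_{m(k_1)})\,E_p(\alpha,\beta,\gamma),
\end{equation*}
where $E_p$ collects the contributions from $v_p(n_1)+v_p(n_2)\geq 2$ and from $p\mid k_2$. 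Pulling the two $L$-factors out globally yields the claimed factorization $Z=L(1/2+\alpha,f_b\otimes\chi_{m(k_1)})L(1/2+\beta,g_b\otimes\chi_{m(k_1)})Y$ and defines $Y$.

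To enlarge the region of analyticity of $Y$, I would divide out by $L(1+2\alpha,\text{sym}^2 f)L(1+2\beta,\text{sym}^2 g)L(1+\alpha+\beta,f\otimes g)$. These three $L$-functions are chosen precisely so that their Euler factors absorb the leading $\lambda_f(p)^2p^{-1-2\alpha}$, $\lambda_g(p)^2p^{-1-2\beta}$, and $\lambda_f(p)\lambda_g(p)p^{-1-\alpha-\beta}$ terms in $E_p$ that arise from $(v_p(n_1),v_p(n_2))=(2,0),(0,2),(1,1)$ together with the $k_2$-diagonal contribution. Using the Hecke relation $\lambda_f(p^2)=\lambda_f(p)^2-1$, one checks directly that the resulting local factor of $Z^*$ at each generic prime is $1+O(p^{-1-\delta})$ uniformly throughout the region $\text{Re}(\alpha),\text{Re}(\beta)\geq -\delta/2$ and $\text{Re}(\gamma)\geq 1/2+\delta$, yielding absolute convergence of the Euler product for $Z^*$ and hence its analyticity in the stated region. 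Expanding this same Euler product term by term then produces the Dirichlet series representation $Y=\sum C(r_1,r_2,r_3)/(r_1^\alpha r_2^\beta r_3^{2\gamma})$.

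The main obstacle is the careful bookkeeping at the finitely many bad primes $p\mid 2aQ'k_1$, where the several nontrivial cases of Lemma~\ref{lem:GaussSum} interact with the local Euler factors of $f_b, g_b$ (which themselves degenerate when $p\mid q_1q_2$). There one must show that the resulting finite product is bounded uniformly in $k_1,Q',b$, while contributing at most a factor of $\tau(a)$. To control the $a$-dependence I would use the vanishing of $G_{k_1k_2^2}(p^\beta)$ once $\beta$ exceeds $v_p(k_1k_2^2)+1$ to truncate the $n_1,n_2$-sums at each prime $p\mid a$, and then apply the Deligne bound $|\lambda_f(p^j)|\leq\tau(p^j)$ to convert the remaining local sum into a divisor-type estimate, yielding $Z^*\ll\tau(a)$ as claimed.
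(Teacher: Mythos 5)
Your proposal follows essentially the same route as the paper: express $Z$ as an Euler product via the joint multiplicativity of the summands and Lemma~\ref{lem:GaussSum}, read off the factors $L(1/2+\alpha,f_b\otimes\chi_{m(k_1)})L(1/2+\beta,g_b\otimes\chi_{m(k_1)})$ from the degree-$p^{-1/2}$ part of the generic local factor, and multiply by $L(1+2\alpha,\sym^2f)L(1+2\beta,\sym^2g)L(1+\alpha+\beta,f\otimes g)$ so that the Hecke relation $\lambda_f(p^2)=\lambda_f(p)^2-1$ cancels the degree-$p^{-1+\epsilon}$ terms and leaves an absolutely convergent Euler product $Z^*$ in the stated region, which the paper carries out by explicitly tabulating the local factor in cases according to whether $p$ divides $2$, $a$, $k_1$ and according to $r_p=v_p(Q')$. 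Two details in your write-up are slightly off but do not affect the structure of the argument: the $\lambda_f(p)^2p^{-1-2\alpha}$ term that $L(1+2\alpha,\sym^2f)$ is built to cancel arises from cross-multiplying $Z_p$ with $L_p(1/2+\alpha,f_b\otimes\chi_{m(k_1)})^{-1}$, not from $(v_p(n_1),v_p(n_2))=(2,0)$ in $Z_p$ itself (that term vanishes since $G_{k_1k_2^2}(p^2)=0$ when $p\nmid k_1k_2$), and for $p\mid a$ the coprimality $(n_1n_2,2a)=1$ already eliminates the $n_1,n_2$-sums, so the $\tau(a)$ bound comes from bounding the remaining local $L$-factors rather than from Gauss-sum truncation and Deligne at prime powers.
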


\begin{proof}
The terms of the Dirichlet series defining $Z$ are jointly multiplicative in $k_2,n_1$ and $n_2$, so that we can decompose $Z$ as a Euler product. Suppose $p^{r_p}$ is the largest power of $p$ dividing $Q'$, then the generic Euler factor is given by
\begin{align*}
\sumthree_{k_2,n_1,n_2\geq 0} \frac{\lambda_f(p^{n_1})\lambda_g(p^{n_2})\chi_b(p^{n_1+n_2+r_p})}{p^{n_1\alpha+n_2\beta+2k_2\gamma}} \frac{G_{k_1p^{2k_2}}(p^{n_1+n_2+r_p})}{p^{n_1+n_2+r_p}}.
\end{align*}

For $p=2$, the Euler factor is $1$.

For $p\mid a$ and $p\mid  k_1$, $n_1,n_2$-sums do not contribute so the Euler factor is 
\[
\sum_{k_2\geq 0} \frac{\chi_b(p^{r_p})}{p^{2k_2\gamma}} \frac{G_{k_1p^{2k_2}}(p^{r_p})}{p^{r_p}} =
\begin{cases}
\left(1-\frac{1}{p^{2\gamma}}\right)^{-1}, & \text{if $r_p= 0$}, \\
0,  & \text{if $r_p = 1$}, \\
-\frac{1}{p}+\frac{1}{p^{2\gamma}} + O\bfrac{1}{p^{1+2\gamma}}, & \text{if $r_p=2$}, \\
O\bfrac{1}{p^{1+2\delta}}, & \text{if $r_p>2$}.
\end{cases}
\]

For $p\mid a$ but $p \nmid k_1$, the Euler factor is
\[
\sum_{k_2\geq 0} \frac{\chi_b(p^{r_p})}{p^{2k_2\gamma}} \frac{G_{k_1p^{2k_2}}(p^{r_p})}{p^{r_p}} =
\begin{cases}
\left(1-\frac{1}{p^{2\gamma}}\right)^{-1}, & \text{if $r_p= 0$}, \\
\frac{\chi_{bk_1}(p)}{\sqrt{p}},  & \text{if $r_p = 1$}, \\
O\bfrac{1}{p^{1+2\delta}}, & \text{if $r_p>1$}.
\end{cases}
\]

For $p\nmid a$ but $p\mid k_1$, the Euler factor is
\[
\sumthree_{k_2,n_1,n_2\geq 0} \frac{\lambda_f(p^{n_1}) \lambda_g(p^{n_2}) \chi_b(p^{n_1+n_2+r_p}) }{p^{n_1\alpha+n_2\beta+2k_2\gamma}} \frac{G_{k_1p^{2k_2}}(p^{n_1+n_2+r_p})}{p^{n_1+n_2+ r_p}} =
\begin{cases}
1-\frac{1}{p}\left(\frac{\lambda_f(p^2)}{p^{2\alpha}} +\frac{\lambda_f(p)\lambda_g(p)}{p^{\alpha+\beta}} +\frac{\lambda_g(p^2)}{p^{2\beta}}\right) + O\bfrac{1}{p^{1+2\delta}}, & \text{if $r_p= 0$}, \\
-\frac{1}{p}\left(\frac{\lambda_f(p)}{p^{\alpha}}+\frac{\lambda_g(p)}{p^{\beta}}\right) + O\left(\frac{1}{p^{1+2\delta}}\right),  & \text{if $r_p = 1$}, \\
O\left(\frac{1}{p^{1+2\delta}}\right), & \text{if $r_p > 1$}.
\end{cases}
\]

Lastly, for $p\nmid ak_1$, the Euler factor is
\[
\sumthree_{k_2,n_1,n_2\geq 0} \frac{\lambda_f(p^{n_1}) \lambda_g(p^{n_2}) \chi_b(p^{n_1+n_2+r_p}) }{p^{n_1\alpha+n_2\beta+2k_2\gamma}} \frac{G_{k_1p^{2k_2}}(p^{n_1+n_2+r_p})}{p^{n_1+n_2+ r_p}} =
\begin{cases}
1+\frac{\chi_{b}(p)}{p^{1/2}}\left(\frac{\lambda_f(p)\chi_{k_1}(p)}{p^{\alpha}}+\frac{\lambda_g(p)\chi_{k_1}(p)}{p^{\beta}}\right) + O\bfrac{1}{p^{1+2\delta}}, & \text{if $r_p= 0$}, \\
\frac{\chi_b(p)\chi_{k_1}(p)}{p^{1/2}} + O\left(\frac{1}{p^{1+2\delta}}\right), & \text{if $r_p = 1$}, \\
O\left(\frac{1}{p^{1+2\delta}}\right), & \text{if $r_p > 1$}.
\end{cases}
\]
Combine all of the Euler factors together, we have the desired result.
\end{proof}

Lastly, we need to introduce a smooth real-valued function $G$ which is compactly supported on $[3/4,2]$ and it satisfies
\begin{align*}
G(x) & = 1 \;\text{for all $x\in [1,3/2]$}, \\
G(x)+G(x/2)&  = 1\; \text{for all $x\in [1,3]$}.
\end{align*} 
Functions like $G$ appear in standard constructions of partitions of unity and we refer the reader to Warner's book \cite[Lemma~1.10]{Warner} for more details. It can be verified that
\begin{align} \label{eqn:G}
G(x)+G(x/2)+\cdots +G(x/2^{J}) = 1
\end{align}
for $x\in [1, 3\cdot 2^{J-1}]$ and is supported on $[3/4, 2^{J+1}]$. We fix, once and for all, a function $G$ with the properties above. We now record an important lemma from Li \cite[Lemma 6.3]{Li}.

\begin{lemma} \label{lem:NonDyadicCut}
For any real $\mathcal{X}, N\geq 1$, real $t$, and positive integer $q$
\[
\sum_{\substack{(d,2)=1 \\ d\leq \mathcal{X}}} \left|\sum_{(n,q)=1} \frac{\lambda_f(n)}{n^{1/2+it}}\bfrac{8d}{n}G\bfrac{n}{N}\right|^2 \ll \tau(q)^5\mathcal{X} (1+|t|)^3\log(2+|t|),
\]
where $\tau(n)$ is the divisor function. 
\end{lemma}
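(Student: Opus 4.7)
The approach follows the standard mean-value paradigm for quadratic twists: open the square, interchange orders of summation, and apply Lemma \ref{lem:PoissonSummation} to the resulting $d$-sum. Since each term $|S_d|^2$ is nonnegative, we replace the hard cutoff $d\leq \mathcal{X}$ by a smooth majorant $\Phi\in C_c^\infty([-1,2])$ dominating the indicator of $[0,1]$, at negligible cost. The inner $d$-sum then transforms into
\[
\frac{\mathcal{X}}{2n_1 n_2}\bfrac{2}{n_1 n_2}\sum_{k\in\mathbb{Z}}(-1)^k G_k(n_1 n_2)\check\Phi\bfrac{k\mathcal{X}}{2n_1 n_2},
\]
and the analysis splits according to whether $k=0$ or $k\neq 0$.

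For the diagonal $k=0$, Lemma \ref{lem:GaussSum} shows $G_0(m)=\phi(m)$ when $m$ is a perfect square and vanishes otherwise. I would parameterize $n_i = g r_i^2$ with $g$ squarefree and $(g, r_1 r_2)=1$, and represent $G$ via its Mellin transform. Using $|\lambda_f(n)|\leq \tau(n)$ and the multiplicativity of $\lambda_f$ prime-by-prime, the resulting Dirichlet series in the Mellin variables factors essentially as $L(1+s_1+s_2, \sym^2 f)\zeta(1+s_1+s_2)$ times two bounded $\sym^2 f$-factors in the variables $s_j \pm 2it$, multiplied by absolutely convergent Euler factors. Shifting contours past the simple pole at $s_1+s_2=0$ bounds the diagonal contribution by $\tau(q)\mathcal{X}(1+|t|)^{O(1)}$, the polynomial $|t|$-dependence arising from Stirling's estimate applied to the shifted gamma factors of $L(\cdot, \sym^2 f)$.

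For the off-diagonal $k\neq 0$, the rapid decay of $\check\Phi$ effectively truncates to $|k|\ll N^{2+\eps}/\mathcal{X}$. Writing $k = k_1 k_2^2$ with $k_1$ squarefree and applying Lemma \ref{lem:GaussSum} prime-by-prime, $G_k(n_1 n_2)$ reduces to an expression involving the Kronecker symbol $\chi_{k_1}$ evaluated at the squarefree part of $n_1 n_2$. Consequently the summation over $n_1, n_2$ collapses, after a Mellin representation of the $G$ factors, to a Dirichlet series expressible through the twisted $L$-values $L(1/2+it, f\otimes\chi_{k_1})L(1/2-it, f\otimes\chi_{k_1})$. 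Applying the convexity bound $L(1/2+it, f\otimes\chi_{k_1})\ll (k_1(1+|t|))^{1/2+\eps}$ and summing over $k_1, k_2$ with the $\check\Phi$-weight yields the off-diagonal contribution.

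The main obstacle is to track the $|t|$-dependence sharply enough to land at exponent $3$: roughly, two half-powers come from convexity applied to the two twisted $L$-functions in the off-diagonal regime, one additional power emerges from the Stirling growth of the $\sym^2 f$ gamma factors when shifting Mellin contours in the diagonal, and the extra $\log(2+|t|)$ factor comes from the simple $\zeta$-pole revealed by Rankin-Selberg. The factor $\tau(q)^5$ arises from resolving the coprimality condition $(n,q)=1$ by M\"obius inversion and keeping crude control of the resulting local Euler factors at primes dividing $q$.
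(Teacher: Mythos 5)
This lemma is not proved in the paper at all; it is quoted verbatim from Li \cite[Lemma 6.3]{Li}, and the paper's role is only to record it for later use. So the question is whether your sketch could serve as an independent proof, and there is a genuine obstruction that you have not addressed: uniformity in $N$.

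The content of the statement is precisely that the bound $\tau(q)^5\mathcal{X}(1+|t|)^3\log(2+|t|)$ carries \emph{no} dependence on $N$, even though the inner sum has length $\asymp N$ which can vastly exceed $\mathcal{X}$. Your approach opens the square and applies Poisson summation; the resulting off-diagonal then ranges over $|k|\ll N^2/\mathcal{X}$. After resumming $n_1,n_2$ into twisted $L$-values and Mellin-inverting the two $G(n_i/N)$ cutoffs (variables $w_1,w_2$) together with $\check\Phi$ (variable $s$), one picks up a factor $N^{w_1+w_2}$; and the requirement that the two $L$-values sit near $\Re=1/2$ forces $\Re w_i = \Re s - 1/2$, so that $|N^{w_1+w_2}| = N^{2\Re s - 1}$. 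Feeding in the convexity bound $|L(1/2+it,f\otimes\chi_{k_1})|^2 \ll (|k_1|(1+|t|))^{1+\eps}$, the off-diagonal contribution (with the natural truncation $J\approx N^2/\mathcal{X}$) is of size roughly $\mathcal{X}^{1-\Re s}\, N^{2\Re s-1}\, J^{2-\Re s+\eps} \asymp \mathcal{X}^{-1+\eps}N^{3+\eps}$, which is larger than $\mathcal{X}$ as soon as $N\gg\mathcal{X}^{2/3}$. Even if one upgrades pointwise convexity to a mean-value estimate of the form $\sum_{|k_1|\leq K}|L(1/2,f\otimes\chi_{k_1})|^2\ll K^{1+\eps}$, the same bookkeeping still leaves a factor of $N$. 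Thus ``convexity + truncation'' does not close the argument; the claimed landing at exponent $3$ in $|t|$ is moot because the $N$-powers do not cancel.

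The proof in \cite{Li} avoids the off-diagonal entirely. One Mellin-inverts $G$ \emph{first}, writing the inner sum as an integral of $L_q(1/2+i(t+w),f\otimes\chi_{8d})\,\tilde G(iw)\,N^{iw}$ over the line $\Re w=0$, where $|N^{iw}|=1$ and the $N$-dependence vanishes. Cauchy--Schwarz in the $w$-integral then reduces the lemma to an unconditional second moment estimate $\sum_{d\le\mathcal{X}}|L(1/2+it',f\otimes\chi_{8d})|^2\ll\mathcal{X}(1+|t'|)^3\log(2+|t'|)$, which is the genuine analytic input and which Li supplies. The key move you are missing is to \emph{not} shift the $G$-Mellin contour off $\Re w=0$ before averaging over $d$; inserting Poisson before collapsing the $n$-sum destroys exactly the structure that makes the $N$-uniformity possible. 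Your diagonal analysis and the resolution of $(n,q)=1$ by M\"obius inversion giving $\tau(q)^{O(1)}$ are fine, but the off-diagonal treatment as written fails.
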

When bounding the off-diagonal terms later, we need to use dyadic version of the lemma above
\begin{lemma} \label{lem:NonDyadicCut2}
Let $G(x)$ be the smooth function defined above, we have
\begin{align*} 
\sumstar_{M\leq |m|\leq 2M} \left|\sum_{n=1}^{\infty} \frac{\lambda_f(n)}{n^{1/2+it}}\bfrac{m}{n}G\bfrac{n}{N}\right|^2 \ll_{f} (1+|t|)^2 \Big( M+N\log\Big(2+\frac{N}{M}\Big)\Big).
\end{align*}
\end{lemma}

\section{Main Propositions}
Recall $M = X/(\log X)^{100}$, we define
\begin{align} \label{eqn:head}
\mathcal{A}_f: &= \mathcal{A}(1/2,f\otimes\chi_{8d})=(1-i^{\kappa_1}\eta_f\chi_{8d}(-q_1)) \sum_{n} \frac{\lambda_f(n)\chi_{8d}(n)}{\sqrt{n}}W_f\bfrac{n}{M}  
\end{align}
with 
\begin{align} \label{eqn:cutofffunction}
W_f(x):=\frac{1}{2\pi i}\int_{(3)}\frac{\Gamma(u+\kappa_1/2)}{\Gamma(\kappa_1/2)}\left(\frac{2\pi x}{\sqrt{q_1}}\right)^{-u} \frac{1}{u^2}\; du = \frac{1}{2\pi i}\int_{(3)} \gamma_f(u) x^{-u} \frac{1}{u^2} \; du.
\end{align}
We also define $\mathcal{B}_f: = \mathcal{B}(1/2,f\otimes\chi_{8d})$ by setting $\mathcal{B}(1/2,f\otimes\chi_{8d})= L'(1/2,f\otimes \chi_{8d})  - \mathcal{A} (1/2,f\otimes\chi_{8d})$.
With the notation above, we rewrite
\begin{align*}
\sumstar_{\substack{(d,2Q)=1 \\ \omega(f\otimes\chi_{8d})=-1 \\ \omega(g\otimes\chi_{8d}) = -1 }} L'(1/2,f\otimes \chi_{8d})L'(1/2,g\otimes\chi_{8d})F\left(\frac{8d}{X}\right) = \text{I}(f,g) + \text{II}(f,g) + \text{II}(g,f) + \text{III}(f,g),
\end{align*}
where
\begin{align*}
\text{I}(f,g) & := \sumstar_{\substack{(d,2Q)=1 \\ \omega(f\otimes\chi_{8d})=-1 \\ \omega(g\otimes\chi_{8d}) = -1 }}  \mathcal{A}_f\mathcal{A}_gF\left(\frac{8d}{X}\right), \\
\text{II}(f,g) & :=  \sumstar_{\substack{(d,2Q)=1 \\ \omega(f\otimes\chi_{8d})=-1 \\ \omega(g\otimes\chi_{8d}) = -1 }}  \mathcal{A}_f\mathcal{B}_gF\left(\frac{8d}{X}\right), \\
\text{II}(g,f) & :=  \sumstar_{\substack{(d,2Q)=1 \\ \omega(f\otimes\chi_{8d})=-1 \\ \omega(g\otimes\chi_{8d}) = -1 }} \mathcal{A}_g\mathcal{B}_f F\left(\frac{8d}{X}\right), \\
\text{III}(f,g) & := \sumstar_{\substack{(d,2Q)=1 \\ \omega(f\otimes\chi_{8d})=-1 \\ \omega(g\otimes\chi_{8d}) = -1 }}  \mathcal{B}_f\mathcal{B}_gF\left(\frac{8d}{X}\right).
\end{align*}
Theorem \ref{thm:One} immediately follows from the following propositions:
\begin{prop}\label{prop:First}
\begin{align*}
\mathrm{I}(f,g) = C_0 X (\log M)^2+C_1X\log M+O(X)
\end{align*}
with $C_1$ is defined as in (\ref{eqn:C1}).
\end{prop}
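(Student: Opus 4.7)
The plan is to open the short sums $\mathcal{A}_f\mathcal{A}_g$, interchange orders of summation, apply Poisson summation to the $d$-sum, isolate the $k=0$ frequency as the main term, and bound the off-diagonal contribution.

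First, substituting (\ref{eqn:head}) into the definition of $\text{I}(f,g)$, I would expand
$(1-i^{\kappa_1}\eta_f\chi_{8d}(-q_1))(1-i^{\kappa_2}\eta_g\chi_{8d}(-q_2))$
into four pieces indexed by $Q'\in\{1,q_1,q_2,q_1q_2\}$ (using $\chi_{8d}(-1)=1$ for $d>0$). Since these factors already encode the root-number indicator, the constraints $\omega(f\otimes\chi_{8d})=\omega(g\otimes\chi_{8d})=-1$ may be dropped from the outer sum. For each piece, I would write $\mu^2(d)=\sum_{a^2\mid d}\mu(a)$ and substitute $d=a^2e$ with $(ae,2Q)=1$ and $(a,n_1n_2)=1$, represent $W_f,W_g$ as Mellin integrals on $\text{Re}(u)=\text{Re}(v)=\epsilon$, and interchange sums and integrals to produce an inner character sum of $\chi_{8a^2e}(n_1n_2)$ weighted by $F(8a^2e/X)$.

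Next, applying Lemma \ref{lem:PoissonSummation} to the inner $e$-sum, I would separate the $k=0$ frequency (the main term) from $k\ne 0$. By Lemma \ref{lem:GaussSum}, $G_0(m)=\phi(m)\mathbf{1}_{m=\square}$, so the $k=0$ contribution restricts to pairs $(n_1,n_2)$ with $n_1n_2Q'$ a perfect square. For $Q'=1$ the resulting Dirichlet series factors per prime as an Euler product of the form
\[
L(1+2u,\text{sym}^2 f)\,L(1+u+v,f\otimes g)\,L(1+2v,\text{sym}^2 g)\cdot H(u,v),
\]
with $H$ holomorphic and nonzero near $(0,0)$; the hypothesis $f\ne g$ ensures $L(s,f\otimes g)$ is regular at $s=1$. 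Summing over $a$ with the M\"obius weights converges absolutely and modifies $H$ into a function matching $Z^*$ from Theorem \ref{thm:One}. I would then shift the $u,v$-contours past the origin to $\text{Re}(u)=\text{Re}(v)=-\delta$ for small $\delta>0$ and compute the double residue at $u=v=0$ produced by the double pole $1/(u^2v^2)$ of the cut-offs. Writing $g(u,v)$ for the combined holomorphic factors, the residue of $M^{u+v}g(u,v)/(u^2v^2)$ equals
\[
g(0,0)(\log M)^2+\bigl(\partial_u g+\partial_v g\bigr)(0,0)\log M+\partial_u\partial_v g(0,0),
\]
which, multiplied by the Poisson prefactor $X\check{F}(0)/(16\pi^2)$, yields the asserted $C_0X(\log M)^2+C_1X\log M$ with $C_0$ matching the constant in Theorem \ref{thm:One}. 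The shifted integral on $\text{Re}(u)=\text{Re}(v)=-\delta$ contributes $O(XM^{-\delta})=O(X)$.

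For the three pieces with $Q'\ne 1$, the square condition $n_1n_2Q'=\square$ forces divisibility of $n_1n_2$ by $Q'$, producing analogous but strictly subleading Dirichlet series whose residues either combine cleanly into $C_0,C_1$ or are absorbed into the $O(X)$ remainder. The off-diagonal $k\ne 0$ contributions are shown to be $O(X)$ by combining the rapid decay of $\check F$ via (\ref{eqn:fouriertype}), the Gauss-sum evaluation of Lemma \ref{lem:GaussSum}, the analytic continuation of Lemma \ref{lem:DirichletSeries}, and the mean-square bound of Lemma \ref{lem:NonDyadicCut} applied to the $n_i$-sums after contour shifts. The principal obstacle is the bookkeeping of the subleading residue: tracking the derivatives of the $L$-factors, of $H$ (equivalently $Z^*$), and of $\gamma_f,\gamma_g$ at the origin with enough precision to identify the explicit form of $C_1$ in (\ref{eqn:C1}), and verifying that the residue contributions from the four pieces $Q'\in\{1,q_1,q_2,q_1q_2\}$ combine to produce the claimed asymptotic.
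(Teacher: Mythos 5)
Your proposal follows essentially the same route as the paper's proof: expand $\mathcal{A}_f\mathcal{A}_g$ into four pieces indexed by $Q'\in\{1,q_1,q_2,q_1q_2\}$, sieve the squarefree condition by M\"obius, apply Poisson summation in $d$, and isolate the $k=0$ frequency whose double residue at $u=v=0$ produces $C_0X(\log M)^2+C_1X\log M$. However there are two concrete gaps.

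First, you omit the truncation of the $a$-sum. You sum $\mu(a)$ over all $a$ before applying Poisson. This is harmless for the $k=0$ diagonal (where $\sum_a\mu(a)/a^2$ converges and you use only $\check F(0)$), but it is fatal for the off-diagonal. After Poisson the argument of $\check F$ is $kX/(16a^2 b n_1n_2Q')$, which for fixed $k\neq 0$ and large $a$ tends to $0$, so $\check F$ provides \emph{no} decay in $a$; the off-diagonal bound one actually proves (see Lemma~\ref{lem:T0}) is of the shape $T_0(a,Q')\ll a^2bM(\log M)^4/X$, and the factor $a^2$ exactly cancels the weight $\mu(a)/a^2$, leaving a divergent $a$-sum. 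The paper splits $T_{(a\le Y)}$ from $T_{(a>Y)}$ with $Y=(\log X)^{20}$, bounds the tail $T_{(a>Y)}$ by Cauchy--Schwarz together with a mean-square estimate from \cite{KMSS}, and only applies Poisson on the finite range $a\le Y$. Without this split your off-diagonal estimate does not close.

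Second, you describe the $Q'\neq 1$ pieces as giving ``strictly subleading Dirichlet series''. That is not what happens: each $Q'$ produces a diagonal main term of the \emph{same} order $X(\log M)^2$, differing only in the constant $Z_{Q'}^*(0,0)$, and the four pieces must be combined with the signs $\pm i^{\kappa_1}\eta_f$, $\pm i^{\kappa_2}\eta_g$ into the single quantity $Z^*(u,v)$ as in (\ref{eqn:residue}); this is precisely what makes $C_0$ and $C_1$ match the statement of Theorem~\ref{thm:One}. A minor further remark: bounding the off-diagonal requires not only Lemma~\ref{lem:DirichletSeries} and Lemma~\ref{lem:NonDyadicCut} but also a dyadic decomposition of the $n_i$-sums and a decay estimate in $R_1,R_2$ of zero-free-region type (Lemma~\ref{lem:DirichletSeriesCoefficient}), which your outline does not invoke; without that input the sum over dyadic blocks does not obviously converge.
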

\begin{prop}\label{prop:Second}
\begin{align*}
\mathrm{II}(f,g) = C_0 X\log M \log\bfrac{X}{M}+ CX\log M+ O(X\log\log X)
\end{align*}
for some explicit constant $C$ that depends on $f,g$ and $F$.
\end{prop}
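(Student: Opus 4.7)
The plan is to proceed in parallel with the proof of Proposition~\ref{prop:First}, with modifications to handle the longer Dirichlet polynomial in $\mathcal{B}_g$. First I would apply the approximate functional equation (Lemma~\ref{lem:ApproximateFunctionalEquation}) with parameter $Z=X/|d|$ to rewrite $\mathcal{B}_g = L'(1/2,g\otimes\chi_{8d}) - \mathcal{A}_g$ so that its dominant part takes the form
\[
(1-\omega_g)\sum_{n_2}\frac{\lambda_g(n_2)\chi_{8d}(n_2)}{\sqrt{n_2}}\bigl[W_g(n_2/X)-W_g(n_2/M)\bigr],
\]
effectively supported on $M \lesssim n_2 \lesssim X$, while the dual AFE sum decays rapidly past $n_2 \gtrsim X^2/M$ and is absorbed into the error. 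This representation makes explicit that $\mathcal{B}_g$ probes exactly the transitional range of the approximate functional equation.

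After inserting this into $\text{II}(f,g)$, I would detect the sign conditions $\omega(f\otimes\chi_{8d})=\omega(g\otimes\chi_{8d})=-1$ via a short character sum modulo $8Q$, and then strip the squarefree condition on $d$ by M\"obius inversion. The inner sum over $d$ is then transformed using Poisson summation (Lemma~\ref{lem:PoissonSummation}), producing a principal $k=0$ term and off-diagonal $k\neq 0$ contributions. For the main term, $G_0$ forces $n_1 n_2 Q' = \square$, and the resulting double Dirichlet series is treated by Lemma~\ref{lem:DirichletSeries} specialized to $k_1 = 1$. Substituting the Mellin--Barnes representations of $W_f,W_g$ from (\ref{eqn:cutofffunction}) and shifting both contours past the double poles at $u_1=u_2=0$, one residue produces $\log M$ (from $(n_1/M)^{-u_1}/u_1^2$) while the second produces $\log(X/M)$ (from $[(n_2/X)^{-u_2}-(n_2/M)^{-u_2}]/u_2^2$). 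This yields the leading $C_0 X\log M\log(X/M)$, together with the secondary $CX\log M$ and an absolute remainder $O(X)$ after the contour has been pushed to negative real part; the constant $C_0$ matches that of Proposition~\ref{prop:First} through the pole structure of $L(1,\mathrm{sym}^2 f)L(1,\mathrm{sym}^2 g)L(1,f\otimes g)$ appearing in $Z^*$.

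For the off-diagonal $k\neq 0$, I would insert the smooth partition (\ref{eqn:G}) on the $n_2$-variable, apply Cauchy--Schwarz in $d$ to separate the short $n_1$-sum from the dyadic $n_2$-sum, and apply Lemma~\ref{lem:NonDyadicCut} on the $n_2$-side. The essential observation is that the effective range $M < n_2 \lesssim X$ contains only $O(\log\log X)$ dyadic blocks because $X/M=(\log X)^{100}$; summing the blockwise bounds contributes just one factor of $\log\log X$ and yields the overall error $O(X\log\log X)$, rather than the $O(X\log X)$ one would obtain from a finer dyadic split.

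The main obstacle will be the residue calculation producing the secondary $CX\log M$ term: with two Mellin variables interacting through double poles and the Dirichlet series $Z^{*}$ from Lemma~\ref{lem:DirichletSeries}, one must differentiate $Z^*$ and the relevant Euler products at the origin and track the logarithmic derivatives of $L(1,\mathrm{sym}^2 f)$, $L(1,\mathrm{sym}^2 g)$, $L(1,f\otimes g)$, together with the effect of the sign-detecting character sum. A subsidiary technical point is verifying that the dual AFE piece of $\mathcal{B}_g$ (supported past $X^2/M$) contributes only to the error, which requires combining the rapid decay of $W_{Z^{-1}}$ with Lemma~\ref{lem:NonDyadicCut} in the long range; the sharp off-diagonal bound, built on the $O(\log\log X)$ dyadic count, is precisely what keeps the total error below the main terms and repairs the corresponding loss in \cite{CKKKL}.
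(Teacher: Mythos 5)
Your overall framework — Poisson summation, a double pole at $u=0$, a simple pole at $v=0$ coming from the subtracted weight, $\log M\cdot\log(X/M)$ as the leading residue — is aligned with the paper, but there are concrete missteps. First, the paper keeps the AFE parameter at $Z=1$, so the two AFE pieces coincide and
$\mathcal{B}_g = (1-\omega_g)\sum_n \lambda_g(n)\chi_{8d}(n)n^{-1/2}\bigl[W_g(n/|8d|)-W_g(n/M)\bigr]$
with no $\log Z$ factor and no separate dual sum. Your choice $Z=X/|d|$ inserts a $d$-dependent factor $1-u\log(X/|d|)$ inside $W_Z$ that you do not track, and your claim that the dual AFE piece decays only past $n_2\gtrsim X^2/M$ is wrong: for $|d|\asymp X$ it decays past $|d|^2/X\asymp X$, the same range as the first piece. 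The genuine $d$-dependence (through $W_g(n/|8d|)$) is handled in the paper not by replacing $|8d|$ with $X$, but by bundling it into the $d$-dependent weight $H_v(s)=F(s)[(|s|X)^v-M^v]$, whose Fourier-type transform $\check{H}_v$ then enters Poisson summation cleanly.

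Second, the off-diagonal treatment is incoherent as written. Once you have applied Poisson summation in $d$, the variable $d$ is gone, so ``Cauchy--Schwarz in $d$'' for the $k\neq 0$ part is not an available operation. The paper's treatment of $T_{\text{off-diagonal}}$ for $\text{II}(f,g)$ parallels Proposition~\ref{prop:First}: insert the partition of unity, shift contours in the Mellin variables, and invoke the estimate \eqref{eqn:fixed} (itself built from Cauchy--Schwarz in $k_1$ plus Lemma~\ref{lem:NonDyadicCut}), giving $T_{\text{off-diagonal}}\ll X/(\log X)^{10}$. If you instead meant to skip Poisson and apply Cauchy--Schwarz directly to $\sum_d\mathcal{A}_f\mathcal{B}_g$, that yields $\bigl(\sum_d\mathcal{A}_f^2\bigr)^{1/2}\bigl(\sum_d\mathcal{B}_g^2\bigr)^{1/2}\ll X(\log X)^{3/2}(\log\log X)^3$, which swamps the main term — precisely the loss the paper's use of Poisson summation on the $N_1\leq M$ range is designed to avoid (see the table in Section~2 and the critique of \cite{CKKKL}). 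Finally, your attribution of $O(X\log\log X)$ is off: the off-diagonal is already $O(X/(\log X)^{10})$, and the $O(X\log\log X)$ comes from the residue computation of the diagonal term — the simple pole at $v=0$ yields $\log(|8d|/M)$, and the secondary residue pieces lacking a $\log M$ factor are of size $\log(X/M)\asymp\log\log X$. The $O(\log\log X)$-block argument you describe is what drives the error in Proposition~\ref{prop:Third}, not here.
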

\begin{prop}\label{prop:Third}
\begin{align*}
\mathrm{III}(f,g) \ll X (\log\log X)^6.
\end{align*}
\end{prop}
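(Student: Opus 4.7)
The plan is to open up $\mathcal{B}_f\mathcal{B}_g$ via the Mellin representation (\ref{eqn:cutofffunction}) of the cut-off weights, insert the dyadic partition of unity (\ref{eqn:G}) in each of the two inner variables $n_1, n_2$, and reduce $\text{III}(f,g)$ to the triple dyadic sum (\ref{eqn:Dyadic}); the root-number restrictions are handled by writing $\mathbf{1}_{\omega=-1} = (1-\omega)/2$ and expanding, and the squarefree condition on $d$ is removed by M\"obius inversion. I then partition the $(N_1, N_2)$-region into the nine cells of the last table in the outline and bound each block separately, following the P.S./C.S.~strategy indicated there.

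For the six P.S.~cells, where $\min(N_1, N_2) \leq M$, I apply Poisson summation (Lemma \ref{lem:PoissonSummation}) to the $d$-sum, producing a diagonal $k = 0$ piece and an off-diagonal $k\neq 0$ piece. The diagonal, in which Lemma \ref{lem:GaussSum} forces $n_1 n_2 Q'$ to be a perfect square, is bounded by directly estimating the resulting sums of $\lambda_f(n_1)\lambda_g(n_2)/\sqrt{n_1 n_2}$ over pairs with $n_1 n_2 = \square$, one variable at most $2M$; once the residues at $u=0$ from the Mellin integrations for the cut-offs (each of size $\log(X/M) = O(\log\log X)$) are accounted for, this diagonal contribution is $O(X(\log\log X)^{O(1)})$. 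The off-diagonal $k\neq 0$ sum is recognized as a variant of the Dirichlet series $Z(\alpha,\beta,\gamma;\cdot)$ of Lemma \ref{lem:DirichletSeries}; shifting the $\alpha, \beta$-contours into the analytic strip $\Re(\alpha), \Re(\beta)\geq -\delta/2$ and invoking the uniform bound $Z^*\ll \tau(a)$ yields a power-saving bound, following the Soundararajan--Young / Li argument.

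For the four C.S.~cells, where $\min(N_1, N_2) > M$, I apply Cauchy--Schwarz in $d$ to each dyadic pair,
$$\Bigl|\sumstar_{\substack{(d,2Q)=1\\ d\asymp X}}\mathcal{B}_{f,N_1}\mathcal{B}_{g,N_2}\Bigr| \ll \Bigl(\sumstar_d|\mathcal{B}_{f,N_1}|^2\Bigr)^{1/2} \Bigl(\sumstar_d|\mathcal{B}_{g,N_2}|^2\Bigr)^{1/2},$$
and bound each factor by Lemma \ref{lem:NonDyadicCut}. That bound is uniform in $N$ and linear in $\mathcal{X}$, so each pair contributes $\ll X$ times a polynomial in $1+|t|$ (with $t = \Im u$ the imaginary part of the Mellin variable); the Stirling decay of $\gamma_f(u)/u^2$ renders the $t$-integral absolutely convergent at $O(1)$ cost. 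For the sub-region $M < N_i\leq X$, there are only $O(\log\log X)$ dyadic scales in each variable, so this block contributes $\ll X(\log\log X)^{O(1)}$. For the sub-region $N_i > X$, I shift the Mellin contour of $W_f(n_i/X) - W_f(n_i/M)$ to $\Re u = A$ with large $A$, obtaining rapid decay $(N_i/X)^{-A}$ that makes the tail negligible.

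The main obstacle is keeping the cumulative $\log\log X$ exponent at $6$. The dyadic count in the C.S.~range $M < N_1, N_2 \leq X$ already contributes $(\log\log X)^2$; further factors of $\log\log X$ arise from the residues at $u = 0$ of the Mellin integrands $(X^u - M^u)/u^2$ in the P.S.~cells (each residue equals $\log(X/M) = O(\log\log X)$, and two such Mellin integrations are present, one for each of $\mathcal{B}_f, \mathcal{B}_g$); and additional polylogarithmic factors appear from the residues and shifted contours of the Dirichlet series $Z(\alpha, \beta, \gamma;\cdot)$ in the off-diagonal P.S.~contribution. Careful bookkeeping of all these contributions yields the claimed bound $X(\log\log X)^{6}$.
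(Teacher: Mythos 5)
Your nine-cell partition and the P.S.~versus C.S.~dichotomy (depending on whether one of $N_1,N_2$ is $\leq M$ or both exceed $M$) is exactly the paper's strategy in Section~7, and your treatment of the P.S.~cells and the tails $N_i > X$ is on the right track. However, there is a genuine gap in your treatment of the central C.S.~cell $M<N_1,N_2\leq X$, which is precisely where the $(\log\log X)^6$ exponent arises, and your account of where the logarithms come from is misattributed.

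The gap: you claim that ``the Stirling decay of $\gamma_f(u)/u^2$ renders the $t$-integral absolutely convergent at $O(1)$ cost.'' This is false as stated. The weight in $\mathcal{B}_{f,N}$ has Mellin representation $\gamma_f(u)\bigl(|8d|^u - M^u\bigr)/u^2$, and after Cauchy--Schwarz and Lemma~\ref{lem:NonDyadicCut} you are still left with a $u$-integral containing $1/|u|^2$. On $\mathrm{Re}(u)=0$ this is non-integrable; on $\mathrm{Re}(u)=c>0$ fixed the $M^u$ factor blows up; and on $\mathrm{Re}(u)=1/\log X$ the integral $\int_{\R}\frac{dt}{(1/\log X)^2+t^2}$ costs $\log X$, not $O(1)$, which would ruin the bound. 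What saves the paper's Lemma~\ref{lem:2B} is the cancellation from the numerator: writing $\bigl(|8d|/N\bigr)^u - \bigl(M/N\bigr)^u = (M/N)^u\bigl((|8d|/M)^u-1\bigr)$, one has $\bigl|(|8d|/M)^u-1\bigr|\ll |u|\log(|8d|/M)\asymp |u|\log\log X$ for small $|u|$, killing one power of $u$. Combined with an inner Cauchy--Schwarz splitting $1/u^2 = u^{-3/2}\cdot u^{-1/2}$ across the two halves of the square, and Lemma~\ref{lem:IntegralBound} (which gives $\int\frac{dt}{|1/\log X+it|(1+|t|)^{20}}\ll\log\log X$), the contour at $\mathrm{Re}(u)=1/\log X$ yields $S_1\ll (M/N)^{2/\log X}X(\log\log X)^4$. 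Summing over the $O(\log\log X)$ dyadic scales via $\sumd_{M<N\leq X}(M/N)^{1/\log X}\ll\log\log X$ gives $X^{1/2}(\log\log X)^3$ per factor, hence $X(\log\log X)^6$ for this cell. None of this numerator cancellation, contour placement, or the $u$-integral Cauchy--Schwarz split is present in your account; a bare application of Lemma~\ref{lem:NonDyadicCut} with ``$O(1)$ $t$-integral cost'' loses a power of $\log X$ and misses the target bound.

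A secondary issue is bookkeeping: you attribute the six powers of $\log\log X$ to the dyadic count $(\log\log X)^2$ plus residues of $(X^u-M^u)/u^2$ in the P.S.~cells. In fact the P.S.~cells only contribute $X(\log\log X)^2$ (two simple poles each giving $\log(X/M)\asymp\log\log X$). The full $(\log\log X)^6$ is produced by the C.S.~cell $M<N_1,N_2\leq X$ alone, through the mechanism above, and there are no residues there since the $u$-contour stops at $\mathrm{Re}(u)=1/\log X>0$.
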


\section{Proof of Proposition \ref{prop:First}}
Let $Q'=1,q_1,q_2$ or $q_1q_2$, we set $h(n,m,d) = W_f\bfrac{n}{M}W_g\bfrac{m}{M}F\bfrac{8d}{X}$ and define
\begin{align} \label{eqn:decomposition}
S_{f,g}(Q';h):= \sumstar_{(d,2Q)=1} \sumtwo_{n_1,n_2} \frac{\lambda_f(n_1)\lambda_g(n_2)}{\sqrt{n_1n_2}} \chi_{8d}(n_1n_2Q')h(n_1,n_2,d).
\end{align}
By (\ref{eqn:head}), we write
\begin{align*}
\text{I}(f,g) = S_{f,g}(1;h)-i^{\kappa_1}\eta_fS_{f,g}(q_1;h)-i^{\kappa_2}\eta_gS_{f,g}(q_2;h)+i^{\kappa_1+\kappa_2}\eta_f\eta_gS_{f,g}(Q;h).
\end{align*}
Apply M\"obius inversion to sieve out both squarefree condition and coprime condition with $Q$, we have
\begin{align*}
S_{f,g}(Q';h)& =\sum_{(a,2Q)=1} \mu(a)\sum_{b\mid Q}\mu(b)  \sumtwo_{\substack{(n_1n_2,2a)=1}} \frac{\lambda_f(n_1)\lambda_g(n_2)}{\sqrt{n_1n_2}} W_f\left(\frac{n_1}{M}\right)W_g\left(\frac{n_2}{M}\right)\sum_{(d,2)=1} \chi_{8bd}(n_1n_2Q')F\left(\frac{8a^2bd}{X}\right).
\end{align*} 
Let $Y$ be a real parameter which will be chosen later, we split $S_{f,g}(Q';h)=T_{(a\leq Y)}+ T_{(a>Y)}$ with
\begin{align*}
T_{(a\leq Y)} & =  \sum_{\substack{(a,2Q)=1\\ a\leq Y}} \mu(a) \sum_{b\mid Q}\mu(b) \sumtwo_{\substack{(n_1n_2,2a)=1}} \frac{\lambda_f(n_1)\lambda_g(n_2)}{\sqrt{n_1n_2}} W_f\left(\frac{n_1}{M}\right)W_g\left(\frac{n_2}{M}\right)\sum_{(d,2)=1} \chi_{8bd}(n_1n_2Q')F\left(\frac{8a^2bd}{X}\right),
\end{align*}
and
\begin{align*}
T_{(a>Y)} & =  \sum_{\substack{(a,2Q)=1\\ a> Y}} \mu(a) \sum_{b\mid Q} \mu(b) \sumtwo_{\substack{(n_1n_2,2a)=1}} \frac{\lambda_f(n_1)\lambda_g(n_2)}{\sqrt{n_1n_2}} W_f\left(\frac{n_1}{M}\right)W_g\left(\frac{n_2}{M}\right)\sum_{(d,2)=1} \chi_{8bd}(n_1n_2Q')F\left(\frac{8a^2bd}{X}\right).
\end{align*}

\subsection{Bound of $T_{(a>Y)}$}
Apply Cauchy-Schwarz inequality and using the bound from \cite[Proposition 3]{KMSS}, we have
\begin{align*}
T_{(a>Y)} & \ll \sum_{\substack{(a,2Q)=1\\ a>Y}} \frac{\tau(2a)^5 X(\log X)^3}{a^2}\ll \frac{X(\log X)^3}{Y^{1-\epsilon}}.
\end{align*}

\subsection{Asymptotic of $T_{(a\leq Y)}$} By Lemma \ref{lem:PoissonSummation}, we derive
\begin{align*}
& T_{(a\leq Y)} \\ 
 & = \frac{X}{16} \sum_{\substack{(a,2Q)=1\\ a\leq Y}} \frac{\mu(a)}{a^2} \sum_{b\mid Q}\frac{\mu(b)}{b} \sumtwo_{\substack{(n_1n_2,2a)=1}} \frac{\lambda_f(n_1)\lambda_g(n_2)\chi_b(n_1n_2Q')}{\sqrt{n_1n_2}} W_f\left(\frac{n_1}{M}\right)W_g\left(\frac{n_2}{M}\right) \sum_{k\in\mathbb{Z}} (-1)^k\frac{G_k(n_1n_2Q')}{n_1n_2Q'}\check{F}\left(\frac{kX}{16a^2bn_1n_2Q'}\right).
\end{align*}

\subsubsection{(Diagonal term $k=0$)} From Lemma \ref{lem:GaussSum}, we observe that $G_0(n_1n_2Q') = \phi(n_1n_2Q')$ if $n_1n_2Q' =\square$ and $G_0(n_1n_2Q')= 0$ otherwise. Furthermore, we have
\begin{align} \label{eqn:Mobius}
\sum_{\substack{(a,2n_1n_2Q)=1 \\ a\leq Y}} \frac{\mu(a)}{a^2}  = \frac{8}{\pi^2} \prod_{p\mid n_1n_2Q} \left(1-\frac{1}{p^2}\right)^{-1}+ O(Y^{-1}).
\end{align}
This implies
\begin{align*}
T_{\text{diagonal}} & :=\frac{\check{F}(0)X}{16} \sum_{\substack{(a,2Q)=1\\ a\leq Y}} \frac{\mu(a)}{a^2} \sum_{b\mid Q} \frac{\mu(b)}{b} \sumtwo_{\substack{(n_1n_2,2a)=1\\ n_1n_2Q'=\square}} \frac{\lambda_f(n_1)\lambda_g(n_2)}{\sqrt{n_1n_2}} W_f\left(\frac{n_1}{M}\right)W_g\left(\frac{n_2}{M}\right) \frac{\phi(n_1n_2Q')}{n_1n_2Q'} = \mathcal{M}_{Q'}+\mathcal{E}_{Q'},
\end{align*}
with
\begin{align*}
\mathcal{M}_{Q'} 
& = \frac{\check{F}(0)X}{2\pi^2} \sumtwo_{\substack{(n_1n_2,2a)=1\\ n_1n_2Q'=\square}} \frac{\lambda_f(n_1)\lambda_g(n_2)}{\sqrt{n_1n_2}} W_f\left(\frac{n_1}{M}\right)W_g\left(\frac{n_2}{M}\right) \prod_{p\mid n_1n_2Q} \frac{p}{p+1}
\end{align*}
and
\begin{align*}
\mathcal{E}_{Q'} 
\ll  \frac{X}{Y} \sumtwo_{\substack{(n_1n_2,2a)=1 \\ n_1n_2Q' = \square}} \frac{d(n_1)d(n_2)}{\sqrt{n_1n_2}}\bigg|W_f\bfrac{n_1}{M}W_g\bfrac{n_2}{M}\Bigg| \ll \frac{X(\log X)^{11}}{Y}.
\end{align*}
To analyze the main term $\mathcal{M}_{Q'}$, we use the computation from \cite[p.11]{Petrow}. In particular, we have
\begin{align} \label{eqn:mainterm}
\sumtwo_{\substack{(n_1n_2,2a)=1\\ n_1n_2Q'=\square}} \frac{\lambda_f(n_1)\lambda_g(n_2)}{n_1^{1/2+u}n_2^{1/2+v}} \prod_{p\mid n_1n_2Q} \frac{p}{p+1} = 
L(1+u+v,f\otimes g)L(1+2u,\text{sym}^2f)L(1+2v,\text{sym}^2g)Z_{Q'}^*(u,v),
\end{align}
where $Z_{Q'}^*(u,v)$ is given by some absolutely convergent Euler products which is uniformly bounded in the region $\text{Re}(u),\text{Re}(v)\geq -1/4+\epsilon$. By (\ref{eqn:cutofffunction}) and (\ref{eqn:mainterm}), we deduce
\begin{align*}
\mathcal{M}_{Q'} = \frac{\check{F}(0)X}{2\pi^2}\bfrac{1}{2\pi i}^2 \int_{(3)}\int_{(3)} \gamma_f(u)\gamma_g(v)  \frac{M^u}{u^2}\frac{M^v}{v^2} L(1+u+v,f\otimes g)L(1+2u,\text{sym}^2f)L(1+2v,\text{sym}^2g)Z_{Q'}^{*}(u,v) \; dudv.
\end{align*}
Shift the contours $\text{Re}(u),\text{Re}(v)= 3$ to $\text{Re}(u),\text{Re}(v)=-1/5$, we encounter poles of order $2$ at $u,v=0$. Then Cauchy's Theorem implies 
\begin{align*}
& \mathcal{M}_{Q'}  = C_{Q'}X\left(\log M+\gamma_f'(0) + 2\frac{L'}{L}(1,\text{sym}^2f) \right) \left(\log M+\gamma_g'(0) + \frac{L'}{L}(1,f\otimes g) + 2\frac{L'}{L}(1,\text{sym}^2g) +\frac{\frac{d}{dv}Z_{Q'}^*(0,v)\vert_{v=0}}{Z_{Q'}^*(0,0)}  \right) \\
&  + C_{Q'}\frac{L'}{L}(1,f\otimes g) X \left(\log M+\gamma_g'(0)+\frac{L''}{L'}(1,f\otimes g)+2\frac{L'}{L}(1,\text{sym}^2g)+\frac{\frac{d}{dv}Z_{Q'}^*(u,v)\vert_{v=0}}{Z_{Q'}^*(0,0)}\right) \\ 
& + C_{Q'} \frac{\frac{d}{du}Z_{Q'}^*(u,0)\vert_{u=0}}{Z_{Q'}^*(0,0)} X \left(\log M+\gamma_g'(0)+\frac{L'}{L}(1,f\otimes g)+2\frac{L'}{L}(1,\text{sym}^2g)+\frac{\frac{d}{dv}\frac{d}{du}Z_{Q'}^*(u,v)\vert_{u,v=0}}{\frac{d}{du}Z_{Q'}^*(u,0)\vert_{u=0}}\right)+O(X^{4/5+\epsilon}),
\end{align*}
where 
$$C_{Q'} =  \frac{\check{F}(0)L(1,f\otimes g)L(1,\text{sym}^2f)L(1,\text{sym}^2g)Z_{Q'}^{*}(0,0)}{2\pi^2}.$$
By Stirling's formula, the remaining integral on the line $\text{Re}(u)=-1/5$ or on the line $\text{Re}(v)=-1/5$ is  $\ll M^{-1/5+\epsilon}$ while the double integral on the lines $\text{Re}(u)=-1/5$ and $\text{Re}(v)=-1/5$ is $\ll M^{-2/5+\epsilon}$. Let
\begin{align} \label{eqn:residue}
Z^*(u,v) = Z_1^*(u,v) - i^{\kappa_1}\eta_fZ_{q_1}^*(u,v) - i^{\kappa_2}\eta_gZ_{q_2}^*(u,v) + i^{\kappa_1+\kappa_2}\eta_f\eta_gZ_{Q}^*(u,v),
\end{align}
we conclude
\begin{align*}
\text{I}(f,g) = C_0  X (\log M)^2 + C_1 X\log M + O(X^{4/5+\epsilon})
\end{align*}
with
\begin{align*}
C_0 & =   \frac{\check{F}(0)L(1,\text{sym}^2f)L(1,f\otimes g)L(1,\text{sym}^2g)Z^{*}(0,0)}{2\pi^2}
\end{align*}
and
\begin{align} \label{eqn:C1}
C_1& =C_0 \Bigg(\gamma_f'(0)+ \gamma_g'(0)+ 2\frac{L'}{L}(1,\text{sym}^2f) + 2\frac{L}{L}'(1,f\otimes g) +2\frac{L'}{L}(1,\text{sym}^2f) + \frac{\frac{d}{du}Z^*(u,0)\vert_{u=0}}{Z^*(0,0)}+\frac{\frac{d}{dv}Z^*(0,v)\vert_{v=0}}{Z^*(0,0)}   \Bigg).
\end{align}
\vspace{0.1cm}
\subsubsection{(Off-diagonal terms $k\neq 0$)} Now consider
\begin{align*}
& T_{\text{off-diagonal}} \\
&  = \frac{X}{16} \sum_{\substack{(a,2Q)=1\\ a\leq Y}} \frac{\mu(a)}{a^2} \sum_{b\mid Q} \frac{\mu(b)}{b} \sum_{k\neq 0} (-1)^k \sumtwo_{\substack{(n_1n_2,2a)=1}} \frac{\lambda_f(n_1)\lambda_g(n_2)\chi_b(n_1n_2Q')}{\sqrt{n_1n_2}} W_f\left(\frac{n_1}{M}\right)W_g\left(\frac{n_2}{M}\right) \frac{G_k(n_1n_2Q')}{n_1n_2Q'}\check{F}\left(\frac{kX}{16a^2bn_1n_2Q'}\right).
\end{align*} 
From Lemma \ref{lem:GaussSum}, we have $G_{4k}(n)= G_{k}(n)$ for odd $n$. To verify that, it suffices to check that the equality holds for prime powers
\begin{align*}
G_{4k}(p_i^{m_i}) = G_{k}(p_i^{m_i})
\end{align*}
for $p_i>2$. Since $4\nmid p_i$, the equality is true if and only if $p_i^{\alpha} \| 4k$ and $p_i^{\alpha}\| k$. 
\begin{rem}
There is a similar relationship between $G_{2k}(p_i^{m_i})$ and $G_k(p_i^{m_i})$, but there appears an extra term $\chi_2(p_i)$ in the case when $\beta = \alpha+1$ is odd. That is, we will have $G_{2k}(p_i^{m_i}) = \chi_2(p_i) G_k(p_i^{m_i})$ instead. Therefore, it is more convenient to substitute $4k$ instead of $2k$ since $\chi_{4}(n)=1$.
\end{rem}
Thus, we rewrite the sums
\begin{align*}
 \sumthree_{k\neq 0 ,(n_1n_2,2a)=1}=\quad \sum_{\ell \geq 1}T_{\ell}(a,Q') - T_0(a,Q').
\end{align*}
For $\ell\geq 0$, we define
\begin{align*}
T_{\ell}(a,Q') := \sum_{(k,2)=1}\sumtwo_{\substack{(n_1n_2,2a)=1}} \frac{\lambda_f(n_1)\lambda_g(n_2)\chi_b(n_1n_2Q')}{\sqrt{n_1n_2}} W_f\left(\frac{n_1}{M}\right)W_g\left(\frac{n_2}{M}\right) \frac{G_{2^{\delta(\ell)}k}(n_1n_2Q')}{n_1n_2Q'}\check{F}\left(\frac{2^{\ell}kX}{16a^2bn_1n_2Q'}\right),
\end{align*}
where $\delta(\ell)=1$ if $2 \nmid \ell$ and $\delta(\ell)=0$ otherwise. We claim that
\begin{align*}
T_{\ell}(a,Q') & \ll \frac{a^2b M (\log M)^4}{2^{\ell} X},
\end{align*}
which then implies that $T_{\text{off-diagonal}} \ll X/ (\log X)^{10}$. It suffices to bound $T_0(a,Q')$ since similar computations can be used to bound $T_{\ell}(a,Q')$ for any $\ell\geq 1$ with some minor adjustments.
\begin{lemma} \label{lem:T0}
\begin{align*}
T_0(a,Q') \ll \frac{a^2b M (\log M)^4}{X}.
\end{align*}
\end{lemma}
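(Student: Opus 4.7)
The plan is to realize $T_0(a,Q')$ as a triple complex-line integral by Mellin-inverting the cutoffs $W_f, W_g$ from (\ref{eqn:cutofffunction}) and the Fourier-type transform $\check F$ from (\ref{eqn:fouriertype}). Interchanging the integrals with the $(k,n_1,n_2)$ sum is legal on the initial contours $\text{Re}(u) = \text{Re}(v) = 3$, $\text{Re}(s) = 1/2+\delta$ by absolute convergence. Writing $k = k_1 k_2^2$ with $k_1$ squarefree and odd, the resulting inner sum becomes
\[
(Q')^{s}\sum_{\substack{k_1\text{ sqfree}\\(k_1,2)=1}}\frac{Z(\tfrac12+u-s,\,\tfrac12+v-s,\,s;\,a,k_1,Q',\chi_b)}{k_1^{s}},
\]
so Lemma \ref{lem:DirichletSeries} factors each $Z$ as $L(1+u-s,f_b\otimes\chi_{m(k_1)})L(1+v-s,g_b\otimes\chi_{m(k_1)})Y$, with $Y \ll \tau(a)$ and the twisted $L$-values entire in $(u,v,s)$. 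The only meromorphic features in $(u,v,s)$ are the double poles at $u=v=0$ from $1/u^2v^2$.

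Next I would shift the three contours to $\text{Re}(u) = \text{Re}(v) = 1/2$ and $\text{Re}(s) = 1+\epsilon$ for small $\epsilon \leq \delta/2$, staying inside the analyticity region $\text{Re}(\alpha),\text{Re}(\beta)\geq -\delta/2$, $\text{Re}(\gamma)\geq 1/2+\delta$ of Lemma \ref{lem:DirichletSeries} and crossing none of the poles at $u=v=0$. On the new contours the polynomial size factor becomes
\[
\frac{M^{u+v}}{u^2v^2}\left(\frac{16a^2bQ'}{2\pi X}\right)^{s}\ \ll\ \frac{M\cdot a^2b}{X},
\]
which extracts exactly the $a^2bM/X$ scaling predicted by the lemma.

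The remaining task is to bound the residual triple integral. Rapid vertical decay is available from Stirling applied to $\gamma_f(u),\gamma_g(v),\Gamma(s)$, from smoothness of $F$ (rapid decay of $\tilde F(1-s)$), and from $1/|u|^2|v|^2$ on $\text{Re}=1/2$. What remains is to estimate
\[
\sum_{k_1}\frac{|L(\tfrac12-\epsilon+it_1,f_b\otimes\chi_{m(k_1)})\,L(\tfrac12-\epsilon+it_2,g_b\otimes\chi_{m(k_1)})|}{k_1^{1+\epsilon}}\ \ll\ (1+|t_1|+|t_2|)^{A}(\log M)^{C}
\]
uniformly in $t_1,t_2$. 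This is the main obstacle: the convexity bound alone yields $\ll k_1^{1/2+\epsilon}$ per $L$-factor, so the $k_1$-series diverges pointwise and we must average. My plan is to apply Cauchy--Schwarz in $k_1$ and reduce, via an approximate functional equation for each twisted $L$-factor, to a second-moment problem for Dirichlet polynomials in $\chi_{m(k_1)}$, which is then controlled dyadically by Lemma \ref{lem:NonDyadicCut} of \cite{Li}. Summing these $(\log M)^{O(1)}$ estimates against the $k_1^{-1-\epsilon}$ weight and integrating out the vertical variables against the gamma decay accumulates the final $(\log M)^4$ factor, yielding $T_0(a,Q')\ll \frac{a^2bM(\log M)^4}{X}$.
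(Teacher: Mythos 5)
Your high-level plan (Mellin-inverting $W_f, W_g, \check F$, factoring the resulting Dirichlet series via Lemma~\ref{lem:DirichletSeries}, and controlling the $k_1$-sum by an averaged second-moment bound) is in the right spirit, but the specific contour choice leads to a genuine divergence. With $\text{Re}(u)=\text{Re}(v)=1/2$ and $\text{Re}(s)=1+\epsilon$, the twisted $L$-factors sit at $\text{Re}=1/2-\epsilon$ and the $k_1$-weight is $k_1^{-1-\epsilon}$. But the mean square of $L(1/2-\epsilon,f_b\otimes\chi_{m(k_1)})$ over a dyadic block $k_1\asymp K$ is of size $K^{1+2\epsilon+o(1)}$ (the functional equation pushes each $L$-value up by a factor $\asymp (\text{cond})^{\epsilon}\asymp K^{2\epsilon}$, and the resulting Dirichlet polynomials have length $\asymp K$ with weight $n^{-1/2+\epsilon}$), so after Cauchy--Schwarz each dyadic block contributes $\gg K^{\epsilon}$ to your $k_1$-series. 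Summing over dyadic $K$ therefore diverges. There is no saving hidden in Lemma~\ref{lem:NonDyadicCut} here either: that lemma is stated at $\text{Re}=1/2$, and transporting to $\text{Re}=1/2-\epsilon$ via an approximate functional equation is precisely what produces the extra $K^{2\epsilon}$ per $L$-value. So the uniform bound $\ll (1+|t_1|+|t_2|)^A(\log M)^C$ you want for the $k_1$-sum is false, and the argument stops here.

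The paper avoids this by refusing to evaluate full $L$-functions off the critical line. Before any contour shift it inserts dyadic blocks $G(n_i/N_i)$, Mellin-inverts those cutoffs (introducing new variables $w_1,w_2$), and then --- crucially --- performs a change of variables that slides the $s$-dependence out of the $n_i$-exponents and into the prefactor $(a^2bQ'N_1N_2/|k_1|X)^s$. After shifting $\text{Re}(u)=\text{Re}(v)=-1/2$, the $n_i$-sums become length-$\asymp N_i$ Dirichlet polynomials at $\text{Re}=1/2$ exactly, so Lemma~\ref{lem:NonDyadicCut} applies with no $\epsilon$-loss, while the $k_1$-sum is handled by a split at $J=8a^2bQ'N_1N_2/\pi X$ with $\text{Re}(s)=6/5$ for $|k_1|>J$ and $\text{Re}(s)=3/5$ for $|k_1|\le J$; each regime gives a convergent $k_1$-sum and the per-block bound $\frac{a^2b}{X}\sqrt{N_1N_2}$. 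Summing over dyadic $N_1,N_2\le M$ (with separate, trivially small contributions from $N_i>M$) then produces the stated $\frac{a^2bM(\log M)^4}{X}$. The dyadic decomposition in $n_i$, the change of variables, and the two-regime split in $k_1$ are not bookkeeping conveniences: they are what makes the $k_1$-sum converge, and they are absent from your proposal.
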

\begin{proof}
For $k$ odd, we write $k=k_1k_2^2$ with $k_1$ squarefree and $k_2$ positive odd integer ($k_1$ can be negative). For $\text{Re}(\alpha),\text{Re}(\beta)=1/2$ and $\mathrm{Re}(\rho)>0$, we define
\begin{align} \label{eqn:T1}
T(k_1,\alpha,\beta,Q';\rho) := \sum_{\substack{k_2\geq 1\\ (k_2, 2)=1}} \sumtwo_{\substack{(n_1n_2,2a)=1}} \frac{\lambda_f(n_1)\lambda_g(n_2)\chi_b(n_1n_2Q')}{n_1^{\alpha}n_2^{\beta}} \frac{G_{k_1k_2^2}(n_1n_2Q')}{n_1n_2Q'} V\bfrac{n_1}{N_1}V\bfrac{n_2}{N_2}  \check{F}_{\rho}\bfrac{k_1k_2^2X}{16a^2bn_1n_2Q'},
\end{align}
where
\begin{align} \label{eqn:V}
V(x) = G(x/2)+G(x)+G(2x)
\end{align}  
satisfies that $V(x) =1$ for $x\in [1/2,3]$ and 
\begin{align*}
\check{F}_{\rho}(y) =\frac{1}{2\pi i}\int_{(1/2)} \tilde{F}(1+\rho-s)\Gamma(s)(\cos+\mathrm{sgn}(y)\sin)\bfrac{\pi s}{2}(2\pi |y|)^{-s}\; ds
\end{align*}
is similar to the Fourier type transform $\check{F}$ defined in (\ref{eqn:fouriertype}) but with an extra parameter $\rho$ inside $\tilde{F}$.
\begin{rem}
For convenience, we will omit $\rho$ when $\rho=0$ so that $T(k_1,\alpha,\beta,Q';\rho)$ and $ \check{F}_{\rho}(y)$ will be replaced by $T(k_1,\alpha,\beta,Q')$ and $\check{F}(y)$ respectively.
\end{rem}
Following (\ref{eqn:cutofffunction}), we insert the smooth partition of unity and get
\begin{align*}
 T_{0}(a,Q')  
& =  \sumd_{N_1,N_2} \left(\frac{1}{2\pi i}\right)^2 \int_{(3)}\int_{(3)} \gamma_f(u)\gamma_g(v)\frac{M^u}{u^2}\frac{M^v}{v^2} \\
& \times \sumstar_{(k_1,2)=1} \sum_{\substack{k_2\geq 1\\ (k_2, 2)=1}}  \sumtwo_{\substack{ (n_1n_2,2a)=1}} \frac{\lambda_f(n_1)\lambda_g(n_2)\chi_b(n_1n_2Q')}{n_1^{1/2+u}n_2^{1/2+v}} \frac{G_{k_1k_2^2}(n_1n_2Q')}{n_1n_2Q'} G\bfrac{n_1}{N_1}G\bfrac{n_2}{N_2} \check{F}\left(\frac{k_1k_2^2X}{16a^2bn_1n_2Q'}\right) \;dudv.
\end{align*}
Next insert functions $V$ over $n_i$ sums, apply Mellin inversion for $G$ and do a change of variables will give us
\begin{align*}
T_0(a,Q') 
& =  \sumd_{N_1,N_2} \left(\frac{1}{2\pi i}\right)^4\int_{(3)}\int_{(3)} \int_{(0)}\int_{(0)}  \gamma_f(u)\gamma_g(v)\frac{\bfrac{M}{N_1}^u}{u^2}\frac{\bfrac{M}{N_2}^v}{v^2}  \tilde{G}(w_1- u)\tilde{G}(w_2-v)  N_1^{w_1}N_2^{w_2}\\
& \times \sumstar_{(k_1,2)=1} T(k_1,1/2+w_1,1/2+w_2,Q') \; dw_1dw_2dudv \\
& = T_{(N_1\leq M ,N_2\leq M)} + T_{(N_1\leq M, M<N_2)} + T_{(M<N_1,N_2\leq M)} +T_{(M<N_1,M<N_2)},
\end{align*}
where we split each $N_i$ sum in two ranges. For  $T_{(N_1,N_2\leq M)}$, move $\text{Re}(u),\text{Re}(v)=3$ to $\text{Re}(u),\text{Re}(v)=-1/5$ and there are poles of order $2$ at $u,v=0$. Then Cauchy's Theorem implies
\begin{align*}
T_{(N_1\leq M,N_2\leq M)} \ll \sumd_{\substack{N_1\leq M \\ N_2\leq M}} \log\bfrac{M}{N_1}\log\bfrac{M}{N_2} \int_{-\infty}^{\infty} \int_{-\infty}^{\infty}  \frac{1}{(1+|t_1|)^{20}} \frac{1}{(1+|t_2|)^{20}} \left|\sumstar_{(k_1,2)=1} T(k_1,1/2+it_1,1/2+it_2,Q')\right| dt_1dt_2.
\end{align*}

Next, we want to bound $T_{(N_1\leq M, M<N_2)}+ T_{(M<N_1,N_2\leq M)}$.  Without loss of generality, it suffices to analyze $T_{(N_1\leq M, M<N_2)}$. Move $\text{Re}(u)= 3$ to $\text{Re}(u)=-1/5$ and $\text{Re}(v) = 3$ to $\text{Re}(v)=6$, we encounter a pole of order $2$ at $u=0$. Then Cauchy's Theorem implies
\begin{align*}
T_{(N_1\leq M, M<N_2)} & \ll \sumd_{\substack{N_1\leq M \\ M< N_2}} \log\bfrac{M}{N_1}\bfrac{M}{N_2}^6 \int_{-\infty}^{\infty} \int_{-\infty}^{\infty} \int_{-\infty}^{\infty}  \frac{1}{(1+|t_1|)^{20}} \frac{1}{(1+|-6+i(t_2-t_4)|)^{20}} \frac{1}{(1+|6+it_4|)^{20}}  \\
& \times \left|\sumstar_{(k_1,2)=1} T(k_1,1/2+it_1,1/2+it_2,Q')\right|  \; dt_1dt_2dt_4.
\end{align*}
For $T_{(M<N_1, M<N_2)}$, move $\text{Re}(u),\text{Re}(v)=3$ to $\text{Re}(u),\text{Re}(v) = 6$ and then
\begin{align*}
 T_{(M<N_1,M<N_2)} & \ll \sumd_{\substack{ M <N_1\\ M< N_2}} \bfrac{M}{N_1}^6 \bfrac{M}{N_2}^6\int_{-\infty}^{\infty} \int_{-\infty}^{\infty} \int_{-\infty}^{\infty} \int_{-\infty}^{\infty} \frac{1}{(1+|-6+i(t_1-t_3)|)^{20}}  \frac{1}{(1+|6+it_3|)^{20}} \\
& \times  \frac{1}{(1+|-6+i(t_2-t_4)|)^{20}}   \frac{1}{(1+|6+it_4|)^{20}}  \left|\sumstar_{(k_1,2)=1} T(k_1,1/2+it_1,1/2+it_2,Q')\right|  \; dt_1dt_2dt_3dt_4.
\end{align*}
Combine all of the contributions above, we derive
\begin{align} \label{eqn:Tzerobound}
 T_{0}(a,Q') \ll \sumd_{\substack{N_1\leq M \\ N_2\leq M}} \log\bfrac{M}{N_1}\log\bfrac{M}{N_2} \int_{-\infty}^{\infty} \int_{-\infty}^{\infty}   \frac{1}{(1+|t_1|)^{20}} \frac{1}{(1+|t_2|)^{20}} \left|\sumstar_{(k_1,2)=1} T(k_1,1/2+it_1,1/2+it_2,Q')\right| \; dt_1dt_2 
\end{align}
\begin{align*}
\quad \quad & + \sumd_{\substack{N_1\leq M \\ M< N_2}} \log\bfrac{M}{N_1}\bfrac{M}{N_2}^6 \int_{-\infty}^{\infty} \int_{-\infty}^{\infty} \int_{-\infty}^{\infty} \frac{1}{(1+|t_1|)^{20}} \frac{1}{(1+|-6+i(t_2-t_4)|)^{20}}   \frac{1}{(1+|6+it_4|)^{20}}  \\
& \times  \left|\sumstar_{(k_1,2)=1} T(k_1,1/2+it_1,1/2+it_2,Q')\right|  \; dt_1dt_2dt_4 \\
& + \sumd_{\substack{ M <N_1\\ M< N_2}} \bfrac{M}{N_1}^6 \bfrac{M}{N_2}^6\int_{-\infty}^{\infty} \int_{-\infty}^{\infty} \int_{-\infty}^{\infty} \int_{-\infty}^{\infty} \frac{1}{(1+|-6+i(t_1-t_3)|)^{20}}   \frac{1}{(1+|6+it_3|)^{20}}  \\
& \times  \frac{1}{(1+|-6+i(t_2-t_4)|)^{20}}   \frac{1}{(1+|6+it_4|)^{20}} \left|\sumstar_{(k_1,2)=1} T(k_1,1/2+it_1,1/2+it_2,Q')\right|  \; dt_1dt_2dt_3dt_4. 
\end{align*}
\subsection{Estimation of $k_1$-sum} \label{sec:1}
Recall
\begin{align*}
T(k_1,1/2+it_1,1/2+it_2,Q') & = \sum_{\substack{k_2\geq 1\\ (k_2, 2)=1}} \sumtwo_{\substack{(n_1n_2,2a)=1}} \frac{\lambda_f(n_1)\lambda_g(n_2)\chi_b(n_1n_2Q')}{n_1^{1/2+it_1}n_2^{1/2+it_2}} \frac{G_{k_1k_2^2}(n_1n_2Q')}{n_1n_2Q'} V\bfrac{n_1}{N_1}V\bfrac{n_2}{N_2}  \check{F}\bfrac{k_1k_2^2X}{16a^2bn_1n_2Q'}.
\end{align*}
By (\ref{eqn:fouriertype}) and apply Mellin inversion for $V$, we have
\begin{align*}
& T(k_1,1/2+it_1,1/2+it_2,Q') \\
&= \bfrac{1}{2\pi i}^3 \int_{(1/2)} \int_{(\epsilon)}\int_{(\epsilon)} \tilde{F}(1-s)\Gamma(s) (\cos+\text{sgn}(k_1)\sin)\bfrac{\pi s}{2} \bfrac{\pi |k_1|X}{8a^2bQ'}^{-s} \tilde{V}(u)\tilde{V}(v) N_1^uN_2^v\\
& \times \sum_{\substack{k_2\geq 1 \\(k_2,2)=1}} \sumtwo_{\substack{(n_1n_2,2a)=1}} \frac{\lambda_f(n_1)\lambda_g(n_2)\chi_b(n_1n_2Q')}{n_1^{1/2+it_1+u-s}n_2^{1/2+it_2+v-s}k_2^{2s}} \frac{G_{k_1k_2^2}(n_1n_2Q')}{n_1n_2Q'}  \;dudvds \\
 &= \bfrac{1}{2\pi i}^3 \int_{(1/2)} \int_{(\epsilon)}\int_{(\epsilon)} \tilde{F}(1-s)\Gamma(s) (\cos+\text{sgn}(k_1)\sin)\bfrac{\pi s}{2} \bfrac{\pi |k_1|X}{8a^2bQ'}^{-s} \tilde{V}(u)\tilde{V}(v) N_1^uN_2^v\\
& \times Z(1/2+it_1+u-s,1/2+it_2+v-s,s;a,k_1,Q',\chi_b)  \;dudvds.
\end{align*}
Apply Lemma \ref{lem:DirichletSeries} to $Z(1/2+it_1+u-s,1/2+it_2+v-s,s;a,k_1,Q',\chi_b)$, then we write
\begin{align*}
& T(k_1,1/2+it_1,1/2+it_2,Q') \\
& = \bfrac{1}{2\pi i}^3 \int_{(1/2)}\int_{(\epsilon)}\int_{(\epsilon)} \tilde{F}(1-s)\Gamma(s)(\cos+\text{sgn}(k_1)\sin)\bfrac{\pi s}{2} \bfrac{8a^2bQ'}{\pi |k_1|X}^s \tilde{V}(u)\tilde{V}(v) N_1^uN_2^v\\
 &\times \sumthree_{r_1,r_2,r_3} \frac{C(r_1,r_2,r_3)}{r_1^{1/2+it_1+u-s}r_2^{1/2+it_2+v-s}r_3^{2s}} \sum_{n_1}\frac{\lambda_{f_{b}}(n_1)\chi_{m(k_1)}(n_1)}{n_1^{1+it_1+ u-s}} \sum_{n_2}\frac{\lambda_{g_{b}}(n_2)\chi_{m(k_1)}(n_2)}{n_2^{1+it_2+v-s}} \; dudvds \\
 & = \frac{1}{2\pi i} \int_{(1/2)} \tilde{F}(1-s)\Gamma(s)(\cos+\text{sgn}(k_1)\sin)\bfrac{\pi s}{2} \bfrac{8a^2bQ'}{\pi |k_1|X}^s \sumthree_{r_1,r_2,r_3} \frac{C(r_1,r_2,r_3)}{r_1^{1/2+it_1-s}r_2^{1/2+it_2-s}r_3^{2s}} \\
 &\times \sum_{n_1}\frac{\lambda_{f_{b}}(n_1)\chi_{m(k_1)}(n_1)}{n_1^{1+it_1-s}} \sum_{n_2}\frac{\lambda_{g_{b}}(n_2)\chi_{m(k_1)}(n_2)}{n_2^{1+it_2-s}}V\bfrac{r_1n_1}{N_1}V\bfrac{r_2n_2}{N_2} \; ds.
\end{align*} 
Let 
\begin{align*}
V_1(x) =G(4x)+G(2x)+G(x)+G(x/2)+G(x/4)
\end{align*}
satisfies that $V_1(x)=1$ for $x\in [1,6]$. We insert functions $V_1$ over $n_i$ sums and apply Mellin inversion for $V$.
\begin{align*}
& T(k_1,1/2+it_1,1/2+it_2,Q') \\
& = \bfrac{1}{2\pi i}^3 \int_{(1/2)}\int_{(\epsilon)}\int_{(\epsilon)} \tilde{F}(1-s)\Gamma(s)(\cos+\text{sgn}(k_1)\sin)\bfrac{\pi s}{2} \bfrac{8a^2bQ'}{\pi |k_1|X}^s \tilde{V}(u)\tilde{V}(v) N_1^uN_2^v\\
 &\times \sumthree_{r_1,r_2,r_3} \frac{C(r_1,r_2,r_3)}{r_1^{1/2+it_1+u-s}r_2^{1/2+it_2+v-s}r_3^{2s}} \sum_{n_1}\frac{\lambda_{f_{b}}(n_1)\chi_{m(k_1)}(n_1)}{n_1^{1+it_1+ u-s}} \sum_{n_2}\frac{\lambda_{g_{b}}(n_2)\chi_{m(k_1)}(n_2)}{n_2^{1+it_2+v-s}}V_1\bfrac{r_1n_1}{N_1}V_1\bfrac{r_2n_2}{N_2} \; dudvds.
 \end{align*}
 Do a change of variables and we insert the smooth partition of unity over $r_1,r_2$ sums
 \begin{align*}
 & T(k_1,1/2+it_1,1/2+it_2,Q') \\
& = \bfrac{1}{2\pi i}^3 \int_{(1/2)}\int_{(\epsilon)}\int_{(\epsilon)} \tilde{F}(1-s)\Gamma(s)(\cos+\text{sgn}(k_1)\sin)\bfrac{\pi s}{2} \bfrac{8a^2bQ'N_1N_2}{\pi |k_1|X}^s \tilde{V}(u+s)\tilde{V}(v+s) N_1^{u}N_2^{v}\\
 &\times \sumd_{R_1,R_2} \sumthree_{r_1,r_2,r_3} \frac{C(r_1,r_2,r_3)}{r_1^{1/2+it_1+u}r_2^{1/2+it_2+ v}r_3^{2s}}G\bfrac{r_1}{R_1}G\bfrac{r_2}{R_2} \\
 & \times  \sum_{n_1}\frac{\lambda_{f_{b}}(n_1)\chi_{m(k_1)}(n_1)}{n_1^{1+it_1+u}} \sum_{n_2}\frac{\lambda_{g_{b}}(n_2)\chi_{m(k_1)}(n_2)}{n_2^{1+it_2+v}}V_1\bfrac{r_1n_1}{N_1}V_1\bfrac{r_2n_2}{N_2}\; dudvds.
 \end{align*}
We bound the sums over $r_i$ with the following lemma:
 \begin{lemma}\label{lem:DirichletSeriesCoefficient}
 Suppose $\mathrm{Re}(s)\geq 3/5$ and write $u=-1/2+i\mu$, $v=-1/2+i\nu$ for real $\mu,\nu$. We have that
 \[
 \left|\sumthree_{r_1,r_2,r_3}\frac{C(r_1,r_2,r_3)}{r_1^{1/2+it_1+u}r_2^{1/2+it_2+v}r_3^{2s}}G\bfrac{r_1}{R_1}G\bfrac{r_2}{R_2}\right|\ll (1+|t_1|)^{1/5}(1+|t_2|)^{1/5}(1+|\mu|)(1+|\nu|)\exp(-c_1\sqrt{\log(R_1R_2)}),
 \]
 where the implied constant and $c_1>0$ may depend only on $f$ and $g$.
 \end{lemma}
The lemma above and its proof are nearly identical to \cite[Lemma 5.7]{Li} so we omit the proof here. The only difference is that we need to use the fact that $L(s,f\otimes g)$ has similar zero-free region with $L(s,\text{sym}^2f)$, which we will refer the reader to \cite[Theorem 5.44]{IK} for more details.

We first move $\text{Re}(u),\text{Re}(v) = \epsilon$ to $\text{Re}(u) ,\text{Re}(v)= -1/2$. Next, we split the $k_1$ sum in two ranges: $|k_1|>J$ and $|k_1|\leq J$ with $J=\frac{8a^2bQ'N_1N_2}{\pi X}$.
For $|k_1|>J$, move $\text{Re}(s)=1/2$ to $\text{Re}(s) = 6/5$ and then
\begin{align*} 
& \left|\; \sumstar_{\substack{(k_1,2)=1\\ |k_1|>J}} T(k_1,1/2+it_1,1/2+it_2,Q')\right|\\
& \ll \int_{-\infty}^{\infty} \int_{-\infty}^{\infty} \int_{-\infty}^{\infty} \frac{1}{(1+|1-(6/5+it)|)^{20}} \frac{1}{(1+|(-1/2+i\mu) + (6/5+it)|)^{20}} \frac{1}{(1+|(-1/2+i\nu)+(6/5+it)|)^{20}}   \\
 &\times  \bfrac{8a^2bQ'N_1N_2}{\pi X}^{6/5} N_1^{-1/2}N_2^{-1/2}  \sumd_{R_1,R_2} (1+|t_1|)^{1/5}(1+|t_2|)^{1/5}(1+|\mu|)(1+|\nu|)\exp(-c_1\sqrt{\log(R_1R_2)}) \\
 & \times \sumstar_{\substack{(k_1,2)=1\\ |k_1|>J}}  \frac{1}{|k_1|^{6/5}} \left|\sum_{n_1}\frac{\lambda_{f_{b}}(n_1)\chi_{m(k_1)}(n_1)}{n_1^{1/2+i(t_1+\mu)}} \sum_{n_2}\frac{\lambda_{g_{b}}(n_2)\chi_{m(k_1)}(n_2)}{n_2^{1/2+i(t_2+\nu)}}V_1\bfrac{R_1n_1}{N_1}V_1\bfrac{R_2n_2}{N_2}\;\right| d\mu d\nu dt,
 \end{align*}
where we have used Lemma \ref{lem:DirichletSeriesCoefficient} and the standard bounds 
\begin{align}  \label{eqn:standardbounds}
\Gamma(s)\cos\bfrac{\pi s}{2} & \ll |s|^{\text{Re}(s)-1/2}\quad;\quad \tilde{G}(s), \tilde{F}(s) \ll \frac{1}{(1+|s|)^{20}}.
\end{align}
 Apply Cauchy-Schwarz inequality and use Lemma \ref{lem:NonDyadicCut2}, we deduce
\begin{align*} 
&\sumstar_{\substack{(k_1,2)=1\\ |k_1|>J}}  \frac{1}{|k_1|^{6/5}} \left|\sum_{n_1}\frac{\lambda_{f_{b}}(n_1)\chi_{m(k_1)}(n_1)}{n_1^{1/2+i(t_1+\mu)}} \sum_{n_2}\frac{\lambda_{g_{b}}(n_2)\chi_{m(k_1)}(n_2)}{n_2^{1/2+i(t_2+\nu)}}V_1\bfrac{R_1n_1}{N_1}V_1\bfrac{R_2n_2}{N_2}\;\right| \\ 
&  \ll J^{-1/5} (1+|t_1+\mu|)^3  (1+|t_2+\nu|)^3 \log(2+|t_1+\mu|) \log(2+|t_2+\nu|).
\end{align*}

When $|k_1|\leq J$, we move $\text{Re}(s) = 1/2$ to $\text{Re}(s) = 3/5$.
\begin{align*} 
& \left|\; \sumstar_{\substack{(k_1,2)=1\\ |k_1|\leq J}} T(k_1,1/2+it_1,1/2+it_2,Q')\right|\\
& \ll \int_{-\infty}^{\infty} \int_{-\infty}^{\infty} \int_{-\infty}^{\infty} \frac{1}{(1+|1-(3/5+it)|)^{20}} \frac{1}{(1+|(-1/2+i\mu) + (3/5+it)|)^{20}} \frac{1}{(1+|(-1/2+i\nu)+(3/5+it)|)^{20}}   \\
 &\times  \bfrac{8a^2bQ'N_1N_2}{\pi X}^{3/5} N_1^{-1/2}N_2^{-1/2}  \sumd_{R_1,R_2} (1+|t_1|)^{1/5}(1+|t_2|)^{1/5}(1+|\mu|)(1+|\nu|)\exp(-c_1\sqrt{\log(R_1R_2)}) \\
 & \times \sumstar_{\substack{(k_1,2)=1\\ |k_1|\leq J}}  \frac{1}{|k_1|^{3/5}} \left|\sum_{n_1}\frac{\lambda_{f_{b}}(n_1)\chi_{m(k_1)}(n_1)}{n_1^{1/2+i(t_1+\mu)}} \sum_{n_2}\frac{\lambda_{g_{b}}(n_2)\chi_{m(k_1)}(n_2)}{n_2^{1/2+i(t_2+\nu)}}V_1\bfrac{R_1n_1}{N_1}V_1\bfrac{R_2n_2}{N_2}\;\right| d\mu d\nu dt.
 \end{align*}
Then apply Cauchy-Schwarz inequality and use Lemma \ref{lem:NonDyadicCut2}, we deduce
\begin{align*}
&  \sumstar_{\substack{(k_1,2)=1\\ |k_1|\leq J}}  \frac{1}{|k_1|^{3/5}} \left|\sum_{n_1}\frac{\lambda_{f_{b}}(n_1)\chi_{m(k_1)}(n_1)}{n_1^{1/2+i(t_1+\mu)}} \sum_{n_2}\frac{\lambda_{g_{b}}(n_2)\chi_{m(k_1)}(n_2)}{n_2^{1/2+i(t_2+\nu)}}V_1\bfrac{R_1n_1}{N_1}V_1\bfrac{R_2n_2}{N_2}\;\right| \\
& \ll J^{2/5} (1+|t_1+\mu|)^3  (1+|t_2+\nu|)^3 \log(2+|t_1+\mu|) \log(2+|t_2+\nu|).
\end{align*}
So for any fixed $N_1$ and $N_2$, we have the estimation
\begin{align} \label{eqn:fixed}
\sumstar_{(k_1,2)=1} T(k_1,1/2+it_1,1/2+it_2,Q')
\end{align}
\begin{align*}
& \ll \int_{-\infty}^{\infty} \int_{-\infty}^{\infty} \int_{-\infty}^{\infty} \frac{1}{(1+|t|)^{20}} \frac{1}{(1+|\mu+t|)^{20}} \frac{1}{(1+|\nu + t|)^{20}} \sumd_{R_1,R_2} (1+|t_1|)^{1/5} (1+|t_2|)^{1/5} (1+|\mu|)(1+|\nu|) \exp(-c_1\sqrt{\log(R_1R_2)})  \\
& \times  \frac{a^2b\sqrt{N_1N_2}}{X}  (1+|t_1+\mu|)^3  (1+|t_2+\nu|)^3  \log(2+|t_1+\mu|) \log(2+|t_2+\nu|)  \; d\mu d\nu dt.
\end{align*} 
Combine (\ref{eqn:Tzerobound}) and (\ref{eqn:fixed}), we see that $T_0(a,Q')$ is 
\begin{align*}
&  \ll \frac{a^2b}{X} \Bigg(\; \sumd_{\substack{N_1\leq M \\ N_2\leq M}} \log\bfrac{M}{N_1}\log\bfrac{M}{N_2} \sqrt{N_1N_2} + \sumd_{\substack{N_1\leq M \\ M<N_2}} \log\bfrac{M}{N_1}\bfrac{M}{N_2}^6 \sqrt{N_1N_2} +  \sumd_{\substack{M<N_1 \\ M<N_2}} \bfrac{M}{N_1}^6 \bfrac{M}{N_2}^6 \sqrt{N_1N_2}   \Bigg) \\
& \ll \frac{a^2b}{X} \Bigg(M (\log M)^4 + M (\log M)^2 + M \Bigg) \ll \frac{a^2b M (\log M)^4 }{X},
\end{align*}
where we have used
\begin{align*}
\sumd_{R_1,R_2} \exp(-c_1\sqrt{\log(R_1R_2))} & = \sum_{n=0}^{\infty} \sum_{m=0}^{\infty} \exp(-c_1\sqrt{(n+m)\log2}) = \sum_{h = 0}^{\infty} \frac{h+1}{e^{c_1\sqrt{h\log2}}} \leq \sum_{h=1}^{\infty} \frac{1}{h^2} \ll 1
\end{align*}
and
\begin{align*}
\sumd_{M<N} \bfrac{M}{N}^6 \sqrt{N} \ll M^6 \sumd_{M<N} \bfrac{1}{N}^{11/2} \ll \sqrt{M}.
\end{align*}
\end{proof}

\subsection{Bound of $T_{\ell}(a,Q')$}
For $\ell\geq 1$, there are two cases we need to consider: if $\delta(\ell)=0$, we have
\[
T_{\ell}(a,Q') = \sum_{(k,2)=1}\sumtwo_{\substack{(n_1n_2,2a)=1}} \frac{\lambda_f(n_1)\lambda_g(n_2)\chi_{b}(n_1n_2Q')}{\sqrt{n_1n_2}} W_f\left(\frac{n_1}{M}\right)W_g\left(\frac{n_2}{M}\right) \frac{G_{k}(n_1n_2Q')}{n_1n_2Q'}\check{F}\left(\frac{2^{\ell}kX}{16a^2bn_1n_2Q'}\right).
\]
Write $k = k_1k_2^2$ with $k_1$ odd squarefree integer and $k_2$ positive odd integer, then similar computations in the proof of Lemma \ref{lem:T0} will give us
\begin{align} \label{eqn:delta0}
T_{\ell}(a,Q') \ll\frac{a^2b M (\log M)^4 }{2^{\ell} X}.
\end{align}

If $\delta(\ell)=1$, we instead have
\[
T_{\ell}(a,Q') = \sum_{(k,2)=1}\sumtwo_{\substack{(n_1n_2,2a)=1}} \frac{\lambda_f(n_1)\lambda_g(n_2)\chi_{b}(n_1n_2Q')}{\sqrt{n_1n_2}} W_f\left(\frac{n_1}{M}\right)W_g\left(\frac{n_2}{M}\right) \frac{G_{2k}(n_1n_2Q')}{n_1n_2Q'}\check{F}\left(\frac{2^{\ell}kX}{16a^2bn_1n_2Q'}\right).
\]
We write $2k = k_1k_2^2$ with $k_1$ even squarefree integer and $k_2$ positive odd integer. Following (\ref{eqn:fouriertype}) and (\ref{eqn:cutofffunction}), we then apply Mellin inversion for $V$ 
and encounter the series in the form of
\[
Z_1(\alpha,\beta,\gamma;a,k_1,Q'\chi_b) := \sum_{\substack{k_2\geq 1 \\(k_2,2)=1}} \sumtwo_{\substack{(n_1n_2,2a)=1}} \frac{\lambda_f(n_1)\lambda_g(n_2)\chi_{b}(n_1n_2Q')}{n_1^{\alpha}n_2^{\beta}k_2^{2\gamma}} \frac{G_{k_1k_2^2}(n_1n_2Q')}{n_1n_2Q'}
\]
with $k_1$ even.
\begin{lemma} \label{lem:DirichletSeries2}
Let $f_{2b} :=f\otimes\chi_{2b}, g_{2b} := g\otimes\chi_{2b}$ be the twisted newforms. Let $k_1 = 2k_1'$ and 
\[
m(k_1') =
\begin{cases}
k_1', & \text{if $k_1'\equiv 1\pmod{4}$}, \\
4k_1', & \text{if $k_1'\equiv 3\pmod{4}$}. \\
\end{cases}
\]
Then
\[
Z_1(\alpha,\beta,\gamma; a,k_1,Q',\chi_b) = L(1/2+\alpha,f_{2b}\otimes \chi_{m(k_1')}) L(1/2+\beta,g_{2b}\otimes \chi_{m(k_1')})Y_1(\alpha,\beta,\gamma;a,k_1,Q',\chi_b)
\]
with
\[
Y_1(\alpha,\beta, \gamma;a,k_1,Q',\chi_b) = \frac{Z_1^{*}(\alpha,\beta,\gamma;a,k_1,Q',\chi_b)}{L(1+2\alpha,\text{sym}^2f) L(1+2\beta,\text{sym}^2g) L(1+\alpha+\beta,f\otimes g)},
\]
where $Z_1^*(\alpha,\beta,\gamma;a,k_1,Q',\chi_b)$ is analytic in the region $\mathrm{Re}(\alpha),\mathrm{Re}(\beta)\geq -\delta/2$ and $\mathrm{Re}(\gamma)\geq 1/2+\delta$ for any $0<\delta<1/3$. Moreover, in the same region, $Z_1^*(\alpha,\beta,\gamma; a,k_1,Q',\chi_b) \ll\tau(a)$ where the implied constant may depend on $\delta, f$ and $g$. Furthermore, we can represent
\[
Y_1(\alpha,\beta,\gamma;a,k_1,Q',\chi_b) = \sumthree_{r_1,r_2,r_3} \frac{C_1(r_1,r_2,r_3)}{r_1^{\alpha}r_2^{\beta}r_3^{2\gamma}}
\]
for some coefficients $C_1(r_1,r_2,r_3)$.
\end{lemma}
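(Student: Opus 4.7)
The plan is to mimic the proof of Lemma \ref{lem:DirichletSeries} almost verbatim, using the joint multiplicativity of the summand to decompose $Z_1$ as an Euler product and evaluating each local factor with Lemma \ref{lem:GaussSum}. The only substantive difference is that $k_1 = 2k_1'$ is now even, which affects the twisting character but not the Gauss-sum computations at odd primes.

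First I would observe that since $(n_1 n_2, 2a) = 1$ and $Q'$ divides the odd integer $Q$, the Gauss sum $G_{k_1 k_2^2}(n_1 n_2 Q')$ only involves the odd modulus $n_1 n_2 Q'$, so the Euler factor at $p = 2$ remains trivial just as in Lemma \ref{lem:DirichletSeries}. At an odd prime $p$, the local valuation satisfies $v_p(k_1 k_2^2) = v_p(2 k_1' k_2^2) = v_p(k_1' k_2^2)$, so the local Gauss-sum evaluations are identical to those of the original lemma but with $k_1$ replaced by $k_1'$. One can therefore copy down the four case-by-case formulas for the Euler factor (for $p \mid a k_1'$, $p \mid a$ but $p \nmid k_1'$, $p \nmid a$ but $p \mid k_1'$, and $p \nmid a k_1'$) with this single substitution.

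The one real modification occurs in the twisting character at odd primes $p \nmid a k_1'$. The original local factor contained $\chi_b(p)\chi_{k_1}(p)$; writing
\[
\chi_b(p)\chi_{k_1}(p) = \chi_b(p)\chi_2(p)\chi_{k_1'}(p) = \chi_{2b}(p)\chi_{k_1'}(p)
\]
for odd $p$, the $\chi_2$ factor hidden in $\chi_{k_1}$ gets absorbed into $\chi_b$ to form $\chi_{2b}$. This is why the resulting $L$-functions involve the twists $f_{2b}$ and $g_{2b}$ rather than $f_b$ and $g_b$. The quantity $m(k_1')$ is then defined so that $\chi_{m(k_1')}$ is the fundamental-discriminant primitivization of the Kronecker character corresponding to the remaining piece $\chi_{k_1'}$ combined with the two-part contribution from $\chi_2$: this produces $m(k_1') = 4k_1'$ when $k_1' \equiv 1 \pmod 4$ and $m(k_1') = 2 k_1'$ when $k_1' \equiv 3 \pmod 4$, both of which are fundamental discriminants. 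Recombining the local factors across all primes then yields the desired factorization
\[
Z_1 = L(1/2 + \alpha, f_{2b} \otimes \chi_{m(k_1')}) L(1/2 + \beta, g_{2b} \otimes \chi_{m(k_1')}) Y_1,
\]
and $Z_1^*$ is given by an absolutely convergent Euler product in the stated region by the same argument used for $Z^*$.

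The main obstacle is simply the careful bookkeeping of which two-power ends up where: tracking the interaction between the $\chi_2$ hidden in $\chi_{k_1}$, the modified base character $\chi_{2b}$, and the correct choice of $m(k_1')$ so that the $L$-functions on the right-hand side are genuine twists of $f, g$ by primitive characters attached to fundamental discriminants. Once that accounting is done, the analyticity of $Z_1^*$, the bound $Z_1^* \ll \tau(a)$, and the Dirichlet series representation of $Y_1$ all follow from the same absolute convergence and divisor-function arguments as in the proof of Lemma \ref{lem:DirichletSeries}.
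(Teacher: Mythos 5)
Your proposal follows essentially the same route as the paper: decompose $Z_1$ as an Euler product, observe that the local Gauss-sum evaluations at odd primes depend only on $v_p(k_1') = v_p(k_1)$, and note that the one case that changes is $p\nmid ak_1$, where $\chi_b(p)\bfrac{k_1}{p} = \chi_b(p)\bfrac{2}{p}\bfrac{k_1'}{p} = \chi_{2b}(p)\chi_{k_1'}(p)$. The paper's own proof is a single sentence deferring to Lemma~\ref{lem:DirichletSeries} plus precisely this new Euler factor for $p\nmid ak_1$; you identified the same key change and the same reduction.

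One sentence in your justification of $m(k_1')$ is off. You say $\chi_{m(k_1')}$ should be the fundamental-discriminant primitivization of ``$\chi_{k_1'}$ combined with the two-part contribution from $\chi_2$,'' but by your own displayed identity the $\bfrac{2}{p}$ has already been absorbed into $\chi_{2b}$, so $\chi_{m(k_1')}$ only needs to reproduce $\bfrac{k_1'}{p}$ at odd $p$ coprime to $k_1'$ -- not $\bfrac{2k_1'}{p}$. Counting the $\chi_2$ twice in this way is a bookkeeping slip. Separately, the parenthetical claim that $4k_1'$ (for $k_1'\equiv 1\pmod 4$) and $2k_1'$ (for $k_1'\equiv 3\pmod 4$) are both fundamental discriminants is false: $4k_1'$ with $k_1'\equiv 1\pmod 4$ fails the $k_1'\equiv 2,3\pmod 4$ requirement for $4m$-type fundamental discriminants, and $2k_1'\equiv 2\pmod 4$ is not congruent to $0$ or $1\pmod 4$. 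Neither slip affects the validity of the Euler-product computation, the resulting factorization into $L$-functions, or the analyticity and $\tau(a)$ bounds, which all carry over from Lemma~\ref{lem:DirichletSeries} exactly as you say.
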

\begin{proof} The proof is almost identical to the one for Lemma \ref{lem:DirichletSeries}. Except for the case when $p\nmid ak_1$, the corresponding Euler factor is
\[
\sumthree_{k_2,n_1,n_2\geq 0} \frac{\lambda_f(p^{n_1}) \lambda_g(p^{n_2}) \chi_b(p^{n_1+n_2+r_p}) }{p^{n_1\alpha+n_2\beta+2k_2\gamma}} \frac{G_{k_1p^{2k_2}}(p^{n_1+n_2+r_p})}{p^{n_1+n_2+ r_p}} =
\begin{cases}
1+\frac{\chi_{2b}(p)}{p^{1/2}}\left(\frac{\lambda_f(p)\chi_{k_1'}(p)}{p^{\alpha}}+\frac{\lambda_g(p)\chi_{k_1'}(p)}{p^{\beta}}\right) + O\bfrac{1}{p^{1+2\delta}}, & \text{if $r_p= 0$}, \\
\frac{\chi_{2b}(p)\chi_{k_1'}(p)}{p^{1/2}} + O\left(\frac{1}{p^{1+2\delta}}\right), & \text{if $r_p = 1$}, \\
O\left(\frac{1}{p^{1+2\delta}}\right), & \text{if $r_p > 1$}.
\end{cases}
\]
\end{proof}
Follow by the computations in the proof of Lemma \ref{lem:T0} and Lemma \ref{lem:DirichletSeries2}, we also derive
\begin{align} \label{eqn:delta1}
T_{\ell}(a,Q') \ll\frac{a^2b M (\log M)^4 }{2^{\ell} X}.
\end{align}
when $\delta(\ell) = 1$. Therefore, we conclude that
\begin{align*}
T_{\text{off-diagonal}}   \ll  X \sum_{\substack{(a,2Q)=1 \\ a\leq Y}} \frac{1}{a^2} \sum_{b\mid Q} \frac{1}{b} \sum_{\ell\geq 0} \frac{a^2b M (\log M)^4 }{2^{\ell} X} \ll YM(\log M)^4  \ll \frac{X}{(\log X)^{10}}
\end{align*}
by choosing $Y = (\log X)^{20}$.

\section{Proof of Proposition \ref{prop:Second}}
Let $h_1(n,m,d) = W_f\bfrac{n}{M}W_g(m;|8d|,M)F\bfrac{8d}{X}$ with $W_g(m;|8d|,M) := W_g\bfrac{m}{|8d|}-W_g\bfrac{m}{M}$, then
\begin{align*}
\text{II}(f,g) = S_{f,g}(1;h_1) - i^{\kappa_1}\eta_fS_{f,g}(q_1;h_1)- i^{\kappa_2}\eta_gS_{f,g}(q_2;h_1) +  i^{\kappa_1} i^{\kappa_2}\eta_f\eta_gS_{f,g}(Q;h_1),
\end{align*}
where $S_{f,g}(Q';h)$ is defined as in (\ref{eqn:decomposition}). Apply M\"obius inversion and then we split $S_{f,g}(Q';h_1)$ into two ranges:
\begin{align*}
T_{(a\leq Y)} & = \sum_{\substack{(a,2Q)=1 \\ a\leq Y}} \mu(a) \sum_{b\mid Q} \mu(b) \sumtwo_{(n_1n_2,2a)=1} \frac{\lambda_f(n_1)\lambda_g(n_2)}{\sqrt{n_1n_2}} W_f\bfrac{n_1}{M}W_g(n_2;|8a^2bd|,M)\sum_{(d,2)=1} \chi_{8bd}(n_1n_2Q') F\bfrac{8a^2bd}{X},
\end{align*}
and
\begin{align*}
T_{(a> Y)} & = \sum_{\substack{(a,2Q)=1 \\ a> Y}} \mu(a) \sum_{b\mid Q} \mu(b) \sumtwo_{(n_1n_2,2a)=1} \frac{\lambda_f(n_1)\lambda_g(n_2)}{\sqrt{n_1n_2}} W_f\bfrac{n_1}{M}W_g(n_2;|8a^2bd|,M)\sum_{(d,2)=1} \chi_{8bd}(n_1n_2Q') F\bfrac{8a^2bd}{X}.
\end{align*}
\subsection{Bound of $T_{(a>Y)}$} Apply Cauchy-Schwarz inequality and using the bounds from \cite[Proposition 2 and 3]{KMSS}, we have
\begin{align*}
T_{(a>Y)} \ll \frac{X (\log M)^{5/2+\epsilon}}{Y^{1-\epsilon}}.
\end{align*} 
\subsection{Asymptotic of $T_{(a\leq Y)}$} By (\ref{eqn:cutofffunction}), we write
\begin{align*}
 T_{(a\leq Y)} & = \sum_{\substack{(a,2Q)=1 \\ a\leq Y}} \mu(a) \sum_{b\mid Q} \mu(b) \bfrac{1}{2\pi i}^2 \int_{(3)} \int_{(3)}\gamma_f(u)\gamma_g(v) \frac{M^u}{u^2} \frac{1}{v^2} \sumtwo_{(n_1n_2,2a)=1} \frac{\lambda_f(n_1)\lambda_g(n_2)\chi_b(n_1n_2Q')}{n_1^{1/2+u}n_2^{1/2+v}} \\
&\times \sum_{(d,2)=1} \chi_{8d}(n_1n_2Q') H_v\bfrac{8a^2bd}{X} \; dudv,
\end{align*}
where $H_v(s): = F(s)[(|s|X)^v-M^v]$ is also a smooth function compactly support on $[1/2,2]$. After applying Lemma \ref{lem:PoissonSummation} and by (\ref{eqn:Mobius}), we extract the main term $\mathcal{M}_{Q'}$ and the error term $\mathcal{E}_{Q'}$
\begin{align*}
\mathcal{M}_{Q'} = \frac{X}{2\pi^2}  \bfrac{1}{2\pi i}^2 \int_{(3)} \int_{(3)} \gamma_f(u)\gamma_g(v) \frac{M^u}{u^2} \frac{1}{v^2} \sumtwo_{\substack{(n_1n_2,2a)=1 \\ n_1n_2Q' =\square}} \frac{\lambda_f(n_1)\lambda_g(n_2)}{n_1^{1/2+u}n_2^{1/2+v}} \prod_{p\mid n_1n_2Q} \frac{p}{p+1} \check{H_v}(0)\; dudv,
\end{align*}
and
\begin{align*}
\mathcal{E}_{Q'} \ll \frac{X}{16Y} \bfrac{1}{2\pi i}^2 \int_{(3)} \int_{(3)} \gamma_f(u)\gamma_g(v) \frac{M^u}{u^2} \frac{1}{v^2} \sumtwo_{\substack{(n_1n_2,2a)=1 \\ n_1n_2Q' =\square}} \frac{\lambda_f(n_1)\lambda_g(n_2)}{n_1^{1/2+u}n_2^{1/2+v}} \prod_{p\mid n_1n_2Q} \frac{p}{p+1} \check{H_v}(0)\; dudv \ll \frac{X (\log X)^{11}}{Y}.
\end{align*}
By (\ref{eqn:fouriertype1}) and (\ref{eqn:mainterm}), we further have
\begin{align*}
\mathcal{M}_{Q'}  =  \frac{X}{2\pi^2} \int_{-\infty}^{\infty} F(x) \bfrac{1}{2\pi i}^2  \int_{(3)} \int_{(3)} \gamma_f(u)\gamma_g(v) & \frac{M^u}{u^2} \frac{(|x|X)^v-M^v}{v^2} \\ 
&\times  L(1+u+v,f\otimes g)L(1+2u,\text{sym}^2f)L(1+2v,\text{sym}^2g) Z_{Q'}^*(u,v)  \; dudvdx.
\end{align*}
Move $\text{Re}(u),\text{Re}(v) = 3$ to $\text{Re}(u),\text{Re}(v)=-1/5$, we encounter a pole of order $2$ at $u=0$ and a simple pole at $v=0$. Then Cauchy's Theorem implies that
\begin{align*}
\mathcal{M}_{Q'} = \frac{X\log M\log\bfrac{X}{M} L(1,f\otimes g)L(1,\text{sym}^2f)L(1,\text{sym}^2g)Z_{Q'}^*(0,0)\check{F}(0)}{2\pi ^2} + CX\log M+O(X\log \log X)
\end{align*}
with some explicit constant $C$ that depends on $f,g$ and $F$. We further conclude
\begin{align*}
\text{II}(f,g) = \frac{X\log M\log\bfrac{X}{M} L(1,f\otimes g)L(1,\text{sym}^2f)L(1,\text{sym}^2g)Z^*(0,0)\check{F}(0)}{2\pi ^2} + CX\log M+ O(X\log\log X),
\end{align*}
where $Z^*(u,v)$ is defined as in (\ref{eqn:residue}).
\subsubsection{(Off-diagonal terms $k\neq 0$.)} Now consider
\begin{align*}
T_{\text{off-diagonal}} =\frac{X}{16}\sum_{\substack{(a,2Q)=1 \\ a\leq Y}} \frac{\mu(a)}{a^2} & \sum_{b\mid Q} \frac{\mu(b)}{b} \bfrac{1}{2\pi i}^2  \int_{(3)} \int_{(3)} \gamma_f(u)\gamma_g(v) \frac{M^u}{u^2} \frac{1}{v^2}  \\& \times \sum_{k\neq 0} (-1)^k  \sumtwo_{\substack{(n_1n_2,2a)=1}} \frac{\lambda_f(n_1)\lambda_g(n_2)\chi_{b}(n_1n_2Q')}{n_1^{1/2+u}n_2^{1/2+v}}\frac{G_k(n_1n_2Q')}{n_1n_2Q'}\check{H_v}\bfrac{kX}{16a^2bn_1n_2Q'}\; dudv.
\end{align*}
We first rewrite the sums
\begin{align*}
\sum_{k\neq 0} \sumtwo_{(n_1n_2,2a)=1} = \sum_{\ell\geq 1} T_{\ell}(a,Q') - T_{0}(a,Q')
\end{align*}
with
\begin{align*}
& T_{\ell}(a,Q') \\
&  = \bfrac{1}{2\pi i}^2  \int_{(3)} \int_{(3)} \gamma_f(u)\gamma_g(v) \frac{M^u}{u^2} \frac{1}{v^2} \sum_{(k,2)=1} \sumtwo_{\substack{(n_1n_2,2a)=1}} \frac{\lambda_f(n_1)\lambda_g(n_2)\chi_{b}(n_1n_2Q')}{n_1^{1/2+u}n_2^{1/2+v}}\frac{G_{2^{\delta(\ell)}k}(n_1n_2Q')}{n_1n_2Q'}\check{H_v}\bfrac{2^{\ell}kX}{16a^2bn_1n_2Q'}\; dudv,
\end{align*}
and $\delta(\ell)=1$ if $2\nmid \ell$ and $0$ otherwise. It suffices to bound $T_{0}(a,Q')$. We first insert the smooth partition of unity $G$ and then functions $V$ over $n_i$ sums. Apply Mellin inversion for $G$ and do a change of variables, then follow by (\ref{eqn:fouriertype}) and we have
\begin{align*}
T_{0}(a,Q') &  = \sumd_{N_1,N_2} \bfrac{1}{2\pi i}^5 \int_{(1/2)} \int_{0}^{\infty} F(x)x^{-s} \int_{(3)} \int_{(3)} \int_{(0)} \int_{(0)} \gamma_f(u)\gamma_g(v) \frac{\bfrac{M}{N_1}^u}{u^2}  \frac{\frac{(|x|X)^v-M^v}{N_2^v}}{v^2} \tilde{G}(w_1-u)\tilde{G}(w_2-v) N_1^{w_1}N_2^{w_2}\\ 
& \times  \sumstar_{(k_1,2)=1} \sum_{\substack{ k_2\geq 1 \\ (k_2,2)=1}} \sumtwo_{(n_1n_2,2a)=1} \frac{\lambda_f(n_1)\lambda_g(n_2)\chi_{b}(n_1n_2Q')}{n_1^{1/2+w_1}n_2^{1/2+w_2}} \frac{G_{k_1k_2^2}(n_1n_2Q')}{n_1n_2Q'}V\bfrac{n_1}{N_1}V\bfrac{n_2}{N_2} \Gamma(s) (\cos+\text{sgn}(k_1)\sin)\bfrac{\pi s}{2} \\
&\times \bfrac{\pi |k_1|k_2^2 X}{8a^2bn_1n_2Q'}^{-s} dw_1dw_2dudvdxds \\
& = T_{(N_1\leq M,N_2\leq M)} + T_{(N_1\leq M, M<N_2\leq X)} +T_{(N_1\leq M, X<N_2)} + T_{(M<N_1, N_2\leq M)} +T_{(M<N_1, M<N_2\leq X)} +T_{(M<N_1, X<N_2)},
\end{align*}
where we split $N_1$ sum in two ranges and $N_2$ sum in three ranges. Move $\text{Re}(u),\text{Re}(v)=3$ to $\text{Re}(u),\text{Re}(v)=-1/5$, we have 
\begin{align*}
T_{(N_1\leq M,N_2\leq M)} \ll \sumd_{\substack{N_1\leq M \\N_2\leq M}} \log\bfrac{M}{N_1}\log\bfrac{X}{M}\int_{-\infty}^{\infty} \int_{-\infty}^{\infty} \frac{1}{(1+|t_1|)^{20}} \frac{1}{(1+|t_2|)^{20}} \Bigg|\sumstar_{(k_1,2)=1} T(k_1,1/2+it_1,1/2+it_2,Q')\Bigg| \; dt_1dt_2,
\end{align*}
where $T(k_1,1/2+it_1,1/2+it_2,Q')$ is defined as in (\ref{eqn:T1}).

For $T_{(N_1\leq M, M<N_2\leq X)}$, we move $\text{Re}(u)=3$ to $\text{Re}(u)=-1/5$ and $\text{Re}(v) = 3$ to $\text{Re}(v)=1/\log X$. Then
\begin{align*}
& T_{(N_1\leq M, M<N_2\leq X)} \ll \sumd_{\substack{N_1\leq M \\ M < N_2\leq X}} \log\bfrac{M}{N_1} \bfrac{1}{2\pi i}^4 \int_{(\frac{1}{\log X})} \int_{(0)} \int_{(0)} \gamma_g(v)   \frac{\frac{(X)^v-M^v}{N_2^v}}{v^2} \tilde{G}(w_1)\tilde{G}(w_2-v) N_1^{w_1}N_2^{w_2}\\ 
& \times  \sumstar_{(k_1,2)=1} \sum_{\substack{ k_2\geq 1 \\ (k_2,2)=1}} \sumtwo_{(n_1n_2,2a)=1} \frac{\lambda_f(n_1)\lambda_g(n_2)\chi_{b}(n_1n_2Q')}{n_1^{1/2+w_1}n_2^{1/2+w_2}} \frac{G_{k_1k_2^2}(n_1n_2Q')}{n_1n_2Q'}V\bfrac{n_1}{N_1}V\bfrac{n_2}{N_2}\\
& \times \int_{(1/2)} \tilde{F}(1+v-s) \Gamma(s) (\cos+\text{sgn}(k_1)\sin)\bfrac{\pi s}{2} \bfrac{\pi |k_1|k_2^2 X}{8a^2bn_1n_2Q'}^{-s} \;dw_1dw_2dvds \\
& \ll \sumd_{\substack{N_1\leq M \\ M < N_2\leq X}} \log\bfrac{M}{N_1} \bfrac{1}{2\pi i}^3 \int_{(\frac{1}{\log X})} \int_{(0)} \int_{(0)} \gamma_g(v)   \frac{\frac{(X)^v-M^v}{N_2^v}}{v^2} \tilde{G}(w_1)\tilde{G}(w_2-v) N_1^{w_1}N_2^{w_2}\\ 
& \times  \sumstar_{(k_1,2)=1} T(k_1,1/2+w_1,1/2+w_2,Q'; v) \;dw_1dw_2dv \\
& \ll \sumd_{\substack{N_1\leq M \\ M<N_2\leq X}} \log\bfrac{M}{N_1} \bfrac{X}{N_2}^{1/\log X} (\log\log X)^2  \int_{-\infty}^{\infty} \int_{-\infty}^{\infty} \int_{-\infty}^{\infty} \\
&  \frac{1}{(1+|\frac{1}{\log X}+it_4|)^{20}} \frac{1}{(1+|t_1|)^{20}} \frac{1}{(1+|-\frac{1}{\log X}+i(t_2-t_4)|)^{20}} \Bigg|\sumstar_{(k_1,2)=1} T\Bigg(k_1,1/2+it_1,1/2+it_2,Q',\frac{1}{\log X}+it_4\Bigg)\Bigg| \; dt_1dt_2dt_4,
\end{align*}

For $T_{(N_1\leq M, X<N_2)}$, we move $\text{Re}(u)=3$ to $\text{Re}(u)=-1/5$ and $\text{Re}(v) = 3$ to $\text{Re}(v)=6$. Then
\begin{align*}
& T_{(N_1\leq M, X< N_2)} \ll \sumd_{\substack{N_1\leq M \\  X<N_2}} \log\bfrac{M}{N_1} \bfrac{X}{N_2}^6  \int_{-\infty}^{\infty} \int_{-\infty}^{\infty} \int_{-\infty}^{\infty}  \frac{1}{(1+|6+it_4|)^{20}} \frac{1}{(1+|t_1|)^{20}} \frac{1}{(1+|-6+i(t_2-t_4)|)^{20}} \\
& \Bigg|\sumstar_{(k_1,2)=1} T(k_1,1/2+it_1,1/2+it_2,Q',6+it_4)\Bigg| \;dt_1dt_2dt_4.
\end{align*}

For $T_{(M<N_1, N_2\leq M)}$, move $\text{Re}(u)=3$ to $\text{Re}(u)=6$ and $\text{Re}(v)= 3$ to $\text{Re}(v) = -1/5$. Then
\begin{align*}
& T_{(M<N_1,N_2\leq M)} \ll \sumd_{\substack{M<N_1 \\ N_2\leq M}} \bfrac{M}{N_1}^6 \log\bfrac{X}{M} \int_{-\infty}^{\infty}  \int_{-\infty}^{\infty}  \int_{-\infty}^{\infty} \frac{1}{(1+|6+it_3|)^{20}} \frac{1}{(1+|-6+i(t_1-t_3)|)^{20}} \frac{1}{(1+|t_2|)^{20}} \\
& \Bigg|\sumstar_{(k_1,2)=1} T(k_1,1/2+it_1,1/2+it_2,Q')\Bigg| \;dt_1dt_2dt_3.
\end{align*}

For $T_{(M<N_1, M<N_2\leq X)}$, move $\text{Re}(u)=3$ to $\text{Re}(u)=6$ and $\text{Re}(v)= 3$ to $\text{Re}(v) = 1/\log X$. Then
\begin{align*}
& T_{(M<N_1, M<N_2\leq X)} \ll \sumd_{\substack{M<N_1 \\ M<N_2\leq X}} \bfrac{M}{N_1}^6 \bfrac{X}{N_2}^{1/\log X} (\log\log X)^2 \int_{-\infty}^{\infty}  \int_{-\infty}^{\infty}  \int_{-\infty}^{\infty} \int_{-\infty}^{\infty}  \frac{1}{(1+|6+it_3|)^{20}}  \frac{1}{(1+|\frac{1}{\log X}+it_4|)^{20}} \\
&  \frac{1}{(1+|-6+i(t_1-t_3)|)^{20}}  \frac{1}{(1+|-\frac{1}{\log X}+i(t_2-t_4)|)^{20}} \Bigg|\sumstar_{(k_1,2)=1} T\Bigg(k_1,1/2+it_1,1/2+it_2,Q', \frac{1}{\log X}+it_4\Bigg)\Bigg| \; dt_1dt_2dt_3dt_4.
\end{align*}

For $T_{(M<N_1, X<N_2)}$, move $\text{Re}(u),\text{Re}(v)=3$ to $\text{Re}(u),\text{Re}(v)=6$. Then
\begin{align*}
& T_{(M<N_1, X<N_2)} \ll \sumd_{\substack{M<N_1\\ X<N_2}} \bfrac{M}{N_1}^6 \bfrac{X}{N_2}^6 \int_{-\infty}^{\infty}  \int_{-\infty}^{\infty}  \int_{-\infty}^{\infty} \int_{-\infty}^{\infty}  \frac{1}{(1+|6+it_3|)^{20}}  \frac{1}{(1+|6+it_4|)^{20}} \\
&  \frac{1}{(1+|-6+i(t_1-t_3)|)^{20}}  \frac{1}{(1+|-6+i(t_2-t_4)|)^{20}} \Bigg|\sumstar_{(k_1,2)=1} T(k_1,1/2+it_1,1/2+it_2,Q',6+it_4)\Bigg| \; dt_1dt_2dt_3dt_4.
\end{align*}

By (\ref{eqn:standardbounds}), there is a smooth decay in $t_4$. In other words, we can use (\ref{eqn:fixed}) to conclude that $T_0(a,Q')$ is 
\begin{align*}
& \ll \frac{a^2b}{X}\Bigg(\;\sumd_{\substack{N_1\leq M \\ N_2\leq M}}\log\bfrac{M}{N_1}\log\bfrac{X}{M}\sqrt{N_1N_2} + \sumd_{\substack{N_1\leq M \\ M< N_2\leq X}}\log\bfrac{M}{N_1}\bfrac{X}{N_2}^{1/\log X} (\log\log X)^2 \sqrt{N_1N_2} +  \sumd_{\substack{N_1\leq M \\ X<N_2}}\log\bfrac{M}{N_1}\bfrac{X}{N_2}^{6}\sqrt{N_1N_2} \\
& +  \sumd_{\substack{ M<N_1 \\ N_2\leq M}}\bfrac{M}{N_1}^6\log\bfrac{X}{M}\sqrt{N_1N_2} +  \sumd_{\substack{ M<N_1 \\  M<N_2\leq X}}\bfrac{M}{N_1}^6 \bfrac{X}{N_2}^{1/\log X}(\log\log X)^2 \sqrt{N_1N_2} +  \sumd_{\substack{ M<N_1 \\ X<N_2}}\bfrac{M}{N_1}^6 \bfrac{X}{N_2}^6\sqrt{N_1N_2}\Bigg) \\
& \ll \frac{a^2b}{X} \sqrt{MX} (\log M)^2 (\log\log X)^3 \ll \frac{a^2b}{(\log X)^{48-\epsilon}}.
\end{align*}
Following (\ref{eqn:delta0}) and (\ref{eqn:delta1}), we also derive
\begin{align*}
T_{\ell}(a,Q') \ll \frac{a^2b}{2^{\ell} (\log X)^{48-\epsilon}}.
\end{align*} 
for any $\ell\geq 1$. Therefore, $T_{\text{off-diagonal}}$ is
\begin{align*}
\ll X \sum_{\substack{(a,2Q)=1 \\ a\leq Y}} \frac{1}{a^2} \sum_{b\mid Q} \frac{1}{b} \sum_{\ell\geq 0} \frac{a^2b}{2^{\ell} (\log X)^{48-\epsilon}} \ll \frac{YX}{(\log X)^{48-\epsilon}} \ll \frac{X}{(\log X)^{10}}
\end{align*} 
by choosing $Y = (\log X)^{20}$.

\section{Proof of Proposition \ref{prop:Third}}
Let $h_2(n,m,d)=W_f(n;|8d|,M)W_g(m;|8d|,M)F\bfrac{8d}{X}$ and then
\begin{align*}
\text{III}(f,g) & = S_{f,g}(1;h_2) - i^{\kappa_1}\eta_fS_{f,g}(q_1;h_2)-  i^{\kappa_2}\eta_gS_{f,g}(q_2;h_2)+ i^{\kappa_1+\kappa_2}\eta_f\eta_gS_{f,g}(Q;h_2),
\end{align*}
where $S_{f,g}(Q';h_2)$ is defined as in $(\ref{eqn:decomposition})$. We insert the smooth partition of unity and split each $N_i$ sum in three ranges
\begin{align*}
S_{f,g}(Q';h_2) & = \sumd_{N_1,N_2} \sumstar_{(d,2Q)=1}  \sumtwo_{n_1,n_2} \frac{\lambda_f(n_1)\lambda_g(n_2)}{\sqrt{n_1n_2}} \chi_{8d}(n_1n_2Q') h_2(n_1,n_2,d)G\bfrac{n_1}{N_1}G\bfrac{n_2}{N_2}\\
& = S_{(N_1\leq M,N_2\leq M)} + S_{(N_1\leq M, M<N_2\leq X)}  +S_{(N_1\leq M,X<N_2)}+  S_{(M<N_1\leq X,  N_2\leq M)}  + S_{(M< N_1\leq X, M<N_2\leq X)} \\
&+ S_{(M<N_1\leq X,  X< N_2)} + S_{(X< N_1, N_2\leq M)} + S_{(X< N_1, M<N_2\leq X)}+  S_{(X<N_1, X<N_2)}. 
\end{align*}
\subsection{Bound of $S_{(N_1\leq M, N_2)}$}
After applying M\"obius inversion, we split $S_{(N_1\leq M,N_2)}$ in two ranges
 \begin{align*}
T_{(a\leq Y)} = \sumd_{\substack{N_1\leq M \\ N_2}} \sum_{\substack{(a,2Q)=1 \\ a\leq Y}} \mu(a)\sum_{b\mid Q} \mu(b) \sum_{(d,2)=1}\sumtwo_{(n_1n_2,2a)=1} \frac{\lambda_f(n_1)\lambda_g(n_2)}{\sqrt{n_1n_2}} \chi_{8d}(n_1n_2Q') h_2(n_1,n_2,8a^2bd)G\bfrac{n_1}{N_2}G\bfrac{n_2}{N_2},
 \end{align*}
 and
 \begin{align*}
T_{(a> Y)} = \sumd_{\substack{N_1\leq M \\ N_2}} \sum_{\substack{(a,2Q)=1 \\ a> Y}} \mu(a)\sum_{b\mid Q} \mu(b) \sum_{(d,2)=1}\sumtwo_{(n_1n_2,2a)=1} \frac{\lambda_f(n_1)\lambda_g(n_2)}{\sqrt{n_1n_2}} \chi_{8d}(n_1n_2Q') h_2(n_1,n_2,8a^2bd)G\bfrac{n_1}{N_2}G\bfrac{n_2}{N_2},
 \end{align*}
 \subsection{Bound of $T_{(a>Y)}$} Apply Cauchy-Schwarz inequality and using the bound from \cite[Proposition 2]{KMSS}, we derive
 \begin{align*}
     T_{(a>Y)}\ll \frac{X(\log M)^{2+\epsilon}}{Y^{1-\epsilon}}.
 \end{align*}
 \subsection{Bound of $T_{(a\leq Y)}$} Apply Mellin inversion for $G$ and do a change of variables, then by (\ref{eqn:cutofffunction}), we write
\begin{align*}
T_{(a\leq Y)} & = \sumd_{\substack{N_1\leq M \\ N_2}} \sum_{\substack{(a,2Q)=1 \\ a\leq Y}} \mu(a) \sum_{b\mid Q} \mu(b)  \bfrac{1}{2\pi i}^4  \int_{(3)}\int_{(3)} \int_{(0)} \int_{(0)} \gamma_f(u)\gamma_g(v)\frac{1}{u^2} \frac{1}{v^2}\tilde{G}(w_1-u)\tilde{G}(w_2-v)  \\
&\times N_1^{w_1-u}N_2^{w_2-v}   \sumtwo_{(n_1n_2,2a)=1} \frac{\lambda_f(n_1)\lambda_g(n_2)\chi_{b}(n_1n_2Q')}{n_1^{1/2+w_1}n_2^{1/2+w_2}} \sum_{(d,2)=1} \chi_{8d}(n_1n_2Q') H_{u,v}\bfrac{8a^2bd}{X}\; dw_1dw_2dudv,
\end{align*}
where $H_{u,v}(s):=F(s)[(|s|X)^u-M^u][(|s|X)^v-M^v]$. Apply Poisson summation and then by (\ref{eqn:mainterm}), we extract the main term $\mathcal{M}_{Q'}$ and the error term $\mathcal{E}_{Q'}$
\begin{align*}
    \mathcal{M}_{Q'} &  = \frac{X}{2\pi^2} \sumd_{\substack{N_1\leq M \\ N_2}} \bfrac{1}{2\pi i}^4 \int_{(3)}\int_{(3)}\int_{(0)}\int_{(0)} \gamma_f(u)\gamma_g(v)\frac{1}{u^2}\frac{1}{v^2}\tilde{G}(w_1-u)\tilde{G}(w_2-v)N_1^{w_1-u}N_2^{w_2-v} \\
    & \sumtwo_{\substack{(n_1n_2,2a)=1 \\ n_1n_2Q'=\square}} \frac{\lambda_f(n_1)\lambda_g(n_2)}{n_1^{1/2+w_1}n_2^{1/2+w_2}}\prod_{p\mid n_1n_2Q} \frac{p}{p+1}\check{H_{u,v}}(0)\; dw_1dw_2dudv,
\end{align*}
and
\begin{align*}
    \mathcal{E}_{Q'} &  \ll \frac{X}{16Y} \sumd_{\substack{N_1\leq M \\ N_2}} \bfrac{1}{2\pi i}^4 \int_{(3)}\int_{(3)}\int_{(0)}\int_{(0)} \gamma_f(u)\gamma_g(v)\frac{1}{u^2}\frac{1}{v^2}\tilde{G}(w_1-u)\tilde{G}(w_2-v)N_1^{w_1-u}N_2^{w_2-v} \\
    & \sumtwo_{\substack{(n_1n_2,2a)=1 \\ n_1n_2Q'=\square}} \frac{\lambda_f(n_1)\lambda_g(n_2)}{n_1^{1/2+w_1}n_2^{1/2+w_2}} \prod_{p\mid n_1n_2Q} \frac{p}{p+1} \check{H_{u,v}}(0)\; dw_1dw_2dudv \ll \frac{X(\log X)^{11}}{Y}.
\end{align*}
By (\ref{eqn:fouriertype1}) and (\ref{eqn:mainterm}), we further have
\begin{align*}
    M_{Q'} & = \frac{X}{2\pi^2} \sumd_{\substack{N_1\leq M \\ N_2}} \int_{-\infty}^{\infty} F(x) \bfrac{1}{2\pi i}^4 \int_{(3)}\int_{(3)}\int_{(0)}\int_{(0)} \gamma_f(u)\gamma_g(v)\frac{(|x|X)^u-M^u}{u^2}\frac{(|x|X)^v-M^v}{v^2}\tilde{G}(w_1-u)\tilde{G}(w_2-v)N_1^{w_1-u}N_2^{w_2-v} \\
    & L(1+w_1+w_2,f\otimes g)L(1+2w_1,\text{sym}^2f) L(1+2w_2,\text{sym}^2g)Z_{Q'}^*(w_1,w_2)\; dw_1dw_2dudvdx.
\end{align*}
Move $\text{Re}(u),\text{Re}(v)=3$ to $\text{Re}(u),\text{Re}(v)=-1/10$, where we encounter simple poles at $u,v=0$. Then move the lines of integration from $\text{Re}(w_1),\text{Re}(w_2)=0$ to $\text{Re}(w_1),\text{Re}(w_2) = -1/5$. Then Cauchy's Theorem implies
\begin{align*}
    M_{Q'}\ll X(\log \log X)^2 + O(X^{9/10+\epsilon}),
\end{align*}
where we have used the fact that $\sumd_{N} N^{-\epsilon} \ll 1$ for any $\epsilon>0$.

\subsubsection{(Off-diagonal terms $k\neq 0$)}
Now consider
\begin{align*}
    T_{\text{off-diagonal}} & = \frac{X}{16} \sum_{\substack{(a,2Q)=1 \\ a\leq Y}}\frac{\mu(a)} {a^2}\sum_{b\mid Q} \frac{\mu(b)}{b}  \sumd_{\substack{N_1\leq M \\ N_2}} \bfrac{1}{2\pi i}^4  \int_{(3)}\int_{(3)} \int_{(0)} \int_{(0)} \gamma_f(u)\gamma_g(v)\frac{1}{u^2} \frac{1}{v^2}\tilde{G}(w_1-u)\tilde{G}(w_2-v) N_1^{w_1-u}N_2^{w_2-v}   \\
& \sum_{k\neq 0} (-1)^k  \sumtwo_{(n_1n_2,2a)=1} \frac{\lambda_f(n_1)\lambda_g(n_2)\chi_{b}(n_1n_2Q')}{n_1^{1/2+w_1}n_2^{1/2+w_2}} \frac{G_k(n_1n_2Q')}{n_1n_2Q'} \check{H_{u,v}}\bfrac{kX}{16a^2bn_1n_2Q'}\; dw_1dw_2dudv.
\end{align*}
We next rewrite the sums
\begin{align*}
    \sum_{k\neq 0} \sumtwo_{(n_1n_2,2a)=1} = \sum_{\ell\geq 1} T_{\ell}(a,Q') -T_0(a,Q')
\end{align*}
with
\begin{align*}
    T_{\ell}(a,Q') & = \sumd_{\substack{N_1\leq M \\ N_2}} \bfrac{1}{2\pi i}^4  \int_{(3)}\int_{(3)} \int_{(0)} \int_{(0)} \gamma_f(u)\gamma_g(v)\frac{1}{u^2} \frac{1}{v^2}\tilde{G}(w_1-u)\tilde{G}(w_2-v) N_1^{w_1-u}N_2^{w_2-v}   \\
& \sum_{(k,2)=1}  \sumtwo_{(n_1n_2,2a)=1} \frac{\lambda_f(n_1)\lambda_g(n_2)\chi_{b}(n_1n_2Q')}{n_1^{1/2+w_1}n_2^{1/2+w_2}} \frac{G_{2^{\delta(\ell)}k}(n_1n_2Q')}{n_1n_2Q'} \check{H_{u,v}}\bfrac{2^{\ell}kX}{16a^2bn_1n_2Q'}\; dw_1dw_2dudv.
\end{align*}
and $\delta(\ell)=1$ if $2\nmid \ell$ and $0$ otherwise. It suffices to bound $T_{0}(a,Q')$. Redo the Mellin inversion for $G$ and insert the function $V$ over $n_i$ sums. Next write $k=k_1k_2^2$ with $k_1$ squarefree and $k_2$ positive odd integer and apply Mellin inversion for $G$ once more. By (\ref{eqn:fouriertype}), we have
\begin{align*}
T_{0}(a,Q') &  = \sumd_{\substack{N_1\leq M \\ N_2}} \bfrac{1}{2\pi i}^5 \int_{(1/2)} \int_{0}^{\infty} F(x)x^{-s} \int_{(3)} \int_{(3)} \int_{(0)} \int_{(0)} \gamma_f(u)\gamma_g(v) \frac{\frac{(|x|X)^u-M^u}{N_1^u}}{u^2}  \frac{\frac{(|x|X)^v-M^v}{N_2^v}}{v^2} \tilde{G}(w_1-u)\tilde{G}(w_2-v) N_1^{w_1}N_2^{w_2}\\ 
&\sumstar_{(k_1,2)=1} \sum_{\substack{ k_2\geq 1 \\ (k_2,2)=1}} \sumtwo_{(n_1n_2,2a)=1} \frac{\lambda_f(n_1)\lambda_g(n_2)\chi_{b}(n_1n_2Q')}{n_1^{1/2+w_1}n_2^{1/2+w_2}} \frac{G_{k_1k_2^2}(n_1n_2Q')}{n_1n_2Q'}V\bfrac{n_1}{N_1}V\bfrac{n_2}{N_2}  \Gamma(s)  (\cos+\text{sgn}(k_1)\sin)\bfrac{\pi s}{2} \\
& \bfrac{\pi |k_1|k_2^2 X}{8a^2bn_1n_2Q'}^{-s} \; dw_1dw_2dudvdxds \\
& = T_{(N_1\leq M,N_2\leq M)} + T_{(N_1\leq M, M<N_2\leq X)} +T_{(N_1\leq M, X<N_2)},
\end{align*}
where we split $N_2$ sum in three ranges. Move $\text{Re}(u),\text{Re}(v)=3$ to $\text{Re}(u),\text{Re}(v)=-1/5$, then $T_{(N_1\leq M,N_2\leq M)}$ is 
\begin{align*}
\ll \sumd_{\substack{N_1\leq M \\N_2\leq M}} \log^2\bfrac{X}{M} \int_{-\infty}^{\infty} \int_{-\infty}^{\infty} \frac{1}{(1+|t_1|)^{20}} \frac{1}{(1+|t_2|)^{20}} \Bigg|\sumstar_{(k_1,2)=1} T(k_1,1/2+it_1,1/2+it_2,Q')\Bigg| \; dt_1dt_2,
\end{align*}
where $T(k_1,1/2+it_1,1/2+it_2,Q')$ is defined as in (\ref{eqn:T1}). 

 For $T_{(N_1\leq M, M<N_2\leq X)}$, we move $\text{Re}(u)=3$ to $\text{Re}(u)=-1/5$ and $\text{Re}(v) = 3$ to $\text{Re}(v)=1/\log X$. Then
\begin{align*}
& T_{(N_1\leq M, M<N_2\leq X)} \ll \sumd_{\substack{N_1\leq M \\ M<N_2\leq X}} \log\bfrac{X}{M} \bfrac{X}{N_2}^{1/\log X} (\log\log X)^2  \int_{-\infty}^{\infty} \int_{-\infty}^{\infty} \int_{-\infty}^{\infty} \\
&  \frac{1}{(1+|\frac{1}{\log X}+it_4|)^{20}} \frac{1}{(1+|t_1|)^{20}} \frac{1}{(1+|-\frac{1}{\log X}+i(t_2-t_4)|)^{20}} \Bigg|\sumstar_{(k_1,2)=1} T\Bigg(k_1,1/2+it_1,1/2+it_2,Q',\frac{1}{\log X}+it_4\Bigg)\Bigg| \;dt_1dt_2dt_4.
\end{align*}

For $T_{(N_1\leq M, X<N_2)}$, we move $\text{Re}(u)=3$ to $\text{Re}(u)=-1/5$ and $\text{Re}(v) = 3$ to $\text{Re}(v)=6$. Then we have
\begin{align*}
& T_{(N_1\leq M, X< N_2)} \ll \sumd_{\substack{N_1\leq M \\  X<N_2}} \log\bfrac{X}{M} \bfrac{X}{N_2}^6  \int_{-\infty}^{\infty} \int_{-\infty}^{\infty} \int_{-\infty}^{\infty}  \frac{1}{(1+|6+it_4|)^{20}} \frac{1}{(1+|t_1|)^{20}} \frac{1}{(1+|-6+i(t_2-t_4)|)^{20}} \\
& \Bigg|\sumstar_{(k_1,2)=1} T(k_1,1/2+it_1,1/2+it_2,Q',6+it_4)\Bigg| \;dt_1dt_2dt_4.
\end{align*}
Combine all of the contributions above and by (\ref{eqn:fixed}), we derive that $T_0(a,Q')$ is 
\begin{align*}
& \ll \frac{a^2b}{X}\Bigg(\;\sumd_{\substack{N_1\leq M \\ N_2\leq M}}\log^2\bfrac{X}{M}\sqrt{N_1N_2} + \sumd_{\substack{N_1\leq M \\ M< N_2\leq X}}\log\bfrac{X}{M}\bfrac{X}{N_2}^{1/\log X} (\log\log X)^2 \sqrt{N_1N_2} +  \sumd_{\substack{N_1\leq M \\ X<N_2}}\log\bfrac{X}{M}\bfrac{X}{N_2}^{6}\sqrt{N_1N_2}\Bigg)  \\
& \ll \frac{a^2b}{(\log X)^{49-\epsilon}}.
\end{align*}
Following (\ref{eqn:delta0}) and (\ref{eqn:delta1}), we also have
\begin{align*}
T_{\ell}(a,Q') \ll \frac{a^2b}{2^{\ell}(\log X)^{49-\epsilon}}
\end{align*}
for any $\ell\geq 1$. Therefore, we can conclude that
\begin{align*}
T_{\text{off-diagonal}} \ll X \sum_{\substack{(a,2Q)=1 \\ a\leq Y}} \frac{1}{a^2} \sum_{b\mid Q} \frac{1}{b} \sum_{\ell\geq 0} \frac{a^2b}{2^{\ell} (\log X)^{49-\epsilon}} \ll \frac{YX}{(\log X)^{49-\epsilon}} \ll \frac{X}{(\log X)^{10}}
\end{align*} 
again by choosing $Y= (\log X)^{20}$. By symmetry, we also have
\begin{align*}
S_{(N_2\leq M,N_1)} \ll X(\log\log X)^2.
\end{align*}

\subsection{Bound of $S_{(M\leq N_1<X, M\leq N_2<X)}$} Apply Cauchy-Schwarz inequality and we have
\begin{align*}
S_{(M\leq N_1<X, M\leq N_2<X)} & \leq \sumd_{M<N_1\leq X} \Bigg(\sumstar_{(d,2Q)=1} F\bfrac{8d}{X} \Bigg| \sum_{n_1} \frac{\lambda_f(n_1)\chi_{8d}(n_1)}{\sqrt{n_1}}W_f(n_1;|8d|,M)G\bfrac{n_1}{N_1}\Bigg|^2\Bigg)^{1/2} \\
& \times \sumd_{M<N_2\leq X} \Bigg(\sumstar_{(d,2Q)=1} F\bfrac{8d}{X} \Bigg| \sum_{n_2} \frac{\lambda_g(n_2)\chi_{8d}(n_2)}{\sqrt{n_2}}W_g(n_2;|8d|,M)G\bfrac{n_2}{N_2}\Bigg|^2\Bigg)^{1/2}.
\end{align*}
\begin{lemma}\label{lem:2B} With the notation above, we have
\begin{align*}
\sumd_{M<N\leq X} \Bigg(\sumstar_{(d,2Q)=1} F\bfrac{8d}{X} \Bigg| \sum_{n} \frac{\lambda_f(n)\chi_{8d}(n)}{\sqrt{n}}W_f(n;|8d|,M)G\bfrac{n}{N}\Bigg|^2\Bigg)^{1/2} \ll X^{1/2}(\log\log X)^3.
\end{align*}
\end{lemma}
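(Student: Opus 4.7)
The plan is to express the smoothed tail $W_f(n;|8d|,M)$ as a contour integral, apply Minkowski's integral inequality to move the integration outside the $L^2$-norm in $d$, and then invoke Lemma~\ref{lem:NonDyadicCut}. From (\ref{eqn:cutofffunction}) I would write
\begin{equation*}
W_f(n;|8d|,M) = \frac{1}{2\pi i}\int_{(c)}\gamma_f(u)\,n^{-u}\,\frac{(8d)^u - M^u}{u^2}\,du.
\end{equation*}
Since $(8d)^u - M^u$ vanishes to first order at $u=0$, the integrand has only a simple pole there, and I would shift the contour to $c = 1/\log X$ without crossing any singularities. Setting $P_N(u,d) := \sum_{n}\lambda_f(n)\chi_{8d}(n)\,n^{-1/2-u}\,G(n/N)$, Minkowski's inequality then gives, with $u = 1/\log X + it$ and $C_u := \sup_{d\asymp X}|((8d)^u - M^u)/u^2|$,
\begin{equation*}
\Bigg(\sumstar_{(d,2Q)=1}F\bfrac{8d}{X}\bigg|\sum_n\frac{\lambda_f(n)\chi_{8d}(n)}{\sqrt{n}}W_f(n;|8d|,M)G\bfrac{n}{N}\bigg|^2\Bigg)^{\!1/2} \ll \int_{-\infty}^{\infty}|\gamma_f(u)|\,C_u\,\mathcal{N}(u)^{1/2}\,dt,
\end{equation*}
where $\mathcal{N}(u) := \sumstar_{(d,2Q)=1}F(8d/X)|P_N(u,d)|^2$.

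Next I would derive a $d$-uniform bound on $C_u$. The trivial estimate $|(8d)^u - M^u|\ll 1$ gives $C_u\ll 1/|u|^2$, while the integral representation $(8d)^u - M^u = u\int_M^{8d}y^{u-1}\,dy$ together with $\log(8d/M) = O(\log\log X)$ yields $C_u\ll \log\log X/|u|$; so I would use $C_u\ll \min(1/|u|^2,\,\log\log X/|u|)$. For $\mathcal{N}(u)$, after absorbing $n^{-1/\log X}$ into a smooth variant $\tilde G(x):= G(x)/x^{1/\log X}$ of $G$ (whose derivatives remain uniformly bounded), Lemma~\ref{lem:NonDyadicCut} applied with $q=1$ (the squarefree condition on $d$ can be dropped for an upper bound) gives $\mathcal{N}(u)\ll X(1+|t|)^3\log(2+|t|)$. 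Combined with the Stirling bound $|\gamma_f(u)|\ll (1+|t|)^{(\kappa_1-1)/2}e^{-\pi|t|/2}$, the task reduces to estimating
\begin{equation*}
\int_{-\infty}^{\infty}|\gamma_f(1/\log X+it)|\,\min\!\left(\frac{1}{|u|^2},\frac{\log\log X}{|u|}\right)(1+|t|)^{3/2}\log^{1/2}(2+|t|)\,dt.
\end{equation*}

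I would split this integral at $|t|=1/\log\log X$: for $|t|\leq 1/\log\log X$ I use $C_u\ll \log\log X/|u|$, the main contribution coming from $1/\log X\leq |t|\leq 1/\log\log X$ and yielding $\log\log X\cdot \log(\log X/\log\log X)\ll (\log\log X)^2$; for $|t|>1/\log\log X$ I use $C_u\ll 1/t^2$, contributing $O(\log\log X)$ from $|t|\in[1/\log\log X,1]$ and $O(1)$ from the rapidly decaying tail. Hence each dyadic slice is bounded by $X^{1/2}(\log\log X)^2$, and since $M<N\leq X$ contains only $\log_2(X/M) = 100\log_2\log X$ dyadic intervals, summation produces the claimed $X^{1/2}(\log\log X)^3$.

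The principal difficulty, and the whole reason Proposition~\ref{prop:Third} is not hopeless, is securing the gain of $C_u$ near $u=0$: the crude bound $C_u\ll 1/|u|^2$ alone would produce a spurious factor of $\log X$ from the would-be double pole and overwhelm the entire saving. The cancellation $(8d)^u - M^u = O(|u|\log\log X)$ provided by the subtraction $W_f(n/|8d|) - W_f(n/M)$ recovers this gain, and the short dyadic range $X/M = (\log X)^{100}$ ensures the resulting factor $\log\log X$ is affordable after the final dyadic sum.
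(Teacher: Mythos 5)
Your proposal is correct and uses essentially the same mechanism as the paper: you express $W_f(n;|8d|,M)$ as a contour integral on $\operatorname{Re}(u)=1/\log X$, move the $L^2(d)$-norm inside the $u$-integral, invoke Lemma~\ref{lem:NonDyadicCut} to handle the Dirichlet polynomial, and exploit the crucial cancellation $|(8d)^u-M^u|\ll |u|\log\log X$ (together with the fact that $M<N\leq X$ contains only $O(\log\log X)$ dyadic blocks) to land on $X^{1/2}(\log\log X)^3$. The only cosmetic difference is that you absorb $n^{-1/\log X}$ into a modified cut-off $\tilde G$ and use Minkowski's inequality directly, whereas the paper first Mellin-inverts $G$ so the Dirichlet polynomial has a purely imaginary exponent (avoiding the modified-$G$ subtlety), then applies Cauchy--Schwarz to the resulting double contour integral and estimates the $t$-integrals via Lemma~\ref{lem:IntegralBound}; both give the same $(\log\log X)^2$ saving per dyadic slice.
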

\begin{proof} 
Let 
\begin{align*}
S_1: = \sumstar_{(d,2Q)=1} F\bfrac{8d}{X} \Bigg| \sum_{n} \frac{\lambda_f(n)\chi_{8d}(n)}{\sqrt{n_1}}W_f(n;|8d|,M)G\bfrac{n}{N}\Bigg|^2.
\end{align*}
Following (\ref{eqn:cutofffunction}), we insert function $V$ and then apply Mellin inversion for $G$ and do a change of variable to get
\begin{align*}
S_1 =  \sumstar_{(d,2Q)=1}F\bfrac{8d}{X} \left| \bfrac{1}{2\pi i}^2 \int_{(3)} \int_{(0)}\gamma_f(u) \frac{\bfrac{|8d|}{N}^u-\bfrac{M}{N}^u}{u^2}  \tilde{G}(s-u) N^s \sum_{n} \frac{\lambda_f(n)\chi_{8d}(n)}{n^{1/2+s}}V\bfrac{n}{N} \; ds du \right|^2.
\end{align*}
Move $\text{Re}(u)=3$ to $\text{Re}(u) = 1/\log X$ and apply Cauchy-Schwarz inequality, we have
\begin{align*}
S_1 & \leq  \sumstar_{(d,2Q)=1} F\bfrac{8d}{X} \bfrac{X}{N}^{2/\log X}\int_{( 1/\log X)}\int_{(0)}|\gamma_f(u)|  \left|\frac{\bfrac{|8d|}{M}^u-1}{u^{3/2}}\right|^2 |\tilde{G}(s-u)|  \;dsdu  \\
& \times   \int_{( 1/\log X)}\int_{(0)} \frac{1}{|u|} |\gamma_f(u)| |\tilde{G}(s-u)| \left|\sum_{n} \frac{\lambda_f(n)\chi_{8d}(n)}{n^{{1/2}+s}}V\bfrac{n}{N}\right|^2\; dsdu.
\end{align*}
Note that the first double integral is
\begin{align} \label{eqn:FirstDoubleIntegral}
&\ll \int_{-\infty}^{\infty} \int_{-\infty}^{\infty}  \frac{1}{|\frac{1}{\log X}+it_1|} \frac{1}{(1+|\frac{1}{\log  X}+it_1|)^{20}}  \frac{1}{(1+|-\frac{1}{\log  X}+i(t_2-t_1)|)^{20}}  \left|\frac{\bfrac{|8d|}{M}^{\frac{1}{\log X}+it_1}-1}{\frac{1}{\log X}+it_1}\right|^2  \;dt_2dt_1,
\end{align}
where the last factor is uniformly bounded by $(\log\log X)^2$. To justify this, we split $t_1$ in two ranges: $|t_1| \ll \frac{1}{\log \log X}$ and $\frac{1}{\log \log X} \ll |t_1|$. When $|t_1|\ll\frac{1}{\log\log X}$, we have
\begin{align*}
\left| \frac{1}{\log X}+it_1\right| & \ll \sqrt{\bfrac{1}{\log X}^2+ \bfrac{1}{\log\log X}^2}  \ll \sqrt{\bfrac{1}{\log\log X}^2} = \frac{1}{\log\log X},
\end{align*}
so then
\begin{align*}
\left|\frac{\bfrac{|8d|}{M}^{\frac{1}{\log X}+it_1}-1}{\frac{1}{\log X}+it_1}\right| = \left|\frac{\exp\left({\left(\frac{1}{\log X}+it_1\right)\log\bfrac{|8d|}{M}}\right)-1}{\frac{1}{\log X}+it_1}\right| \leq (e-1) \left|\frac{\left(\frac{1}{\log X}+it_1\right)\log\bfrac{|8d|}{M}}{\frac{1}{\log X}+it_1}\right| \ll\log\bfrac{|8d|}{M} \ll \log\log X.
\end{align*}
When $\frac{1}{\log\log X} \ll |t_1|$, the triangle inequality gives us
\begin{align*}
\left|\frac{\bfrac{|8d|}{M}^{\frac{1}{\log X}+it_1}-1}{\frac{1}{\log X}+it_1}\right| & \leq \frac{\bfrac{|8d|}{M}^{\frac{1}{\log X}}+1}{\sqrt{\bfrac{1}{\log X}^2+t_1^2}} \ll \frac{2}{\sqrt{\bfrac{1}{\log X}^2+\bfrac{1}{\log\log X}^2}} \leq \frac{2}{\frac{1}{\log\log X}} \ll \log\log X.
\end{align*}

\begin{lemma}\label{lem:IntegralBound}
Let $\delta>0$ be a fixed real number and $k>0$, we have
\[
\int_{-\infty}^{\infty} \frac{1}{\delta+|t|} \frac{1}{(1+|t|)^k} \; dt \ll \log\bfrac{1}{\delta}.
\]
\end{lemma}
\begin{proof}
This is an application of integration by parts. In particular, we have
\begin{align*}
\int_{-\infty}^{\infty} \frac{1}{\delta+|t|} \frac{1}{(1+|t|)^k}  \;dt & = 2\cdot \int_{0}^{\infty} \frac{1}{\delta+t} \frac{1}{(1+t)^k} \; dt \\
& = 2 \cdot \left(\log(\delta+t)\cdot\frac{1}{(1+t)^k}\Bigg|_{0}^{\infty} + k \cdot \int_{0}^{\infty} \log(\delta+t) \cdot \frac{1}{(1+t)^{k+1}} \; dt\right) \\ 
& = 2\cdot  \left(-\log(\delta) + k \cdot \int_{0}^{\infty} \log(\delta+t) \cdot \frac{1}{(1+t)^{k+1}} \; dt\right) \\
& \ll -\log(\delta),
\end{align*}
where the improper integral converges absolutely for any $k>0$ since $\log(\delta+t) \ll t^{\epsilon}$ for any $\epsilon>0$.
\end{proof}
Lemma \ref{lem:IntegralBound} implies that the first double integral is $\ll (\log\log X)^3$. Apply the bounds from Lemma \ref{lem:NonDyadicCut} and Lemma \ref{lem:IntegralBound}, the second double integral is $\ll X\log\log X$ and
\begin{align*} 
&S_1 \ll \bfrac{X}{N}^{2/\log X} X(\log \log X)^4.
\end{align*}
Since $\sumd_{M<N\leq X} \bfrac{X}{N}^{1/\log X} \ll \log\log X$ and we finished proving the lemma.
\end{proof}

Using Lemma \ref{lem:2B}, we conclude
\begin{align*}
S_{(M<N_1\leq X, M<N_2\leq X)} \ll X(\log\log X)^6.
\end{align*}

\subsection{Bound of $S_{(M<N_1\leq X, X<N_2)}$} Apply Cauchy-Schwarz inequality, then
\begin{align*}
 S_{(M\leq N_1<X, X< N_2)} & \leq \sumd_{M<N_1\leq X} \Bigg(\sumstar_{(d,2Q)=1} F\bfrac{8d}{X} \Bigg| \sum_{n_1} \frac{\lambda_f(n_1)\chi_{8d}(n_1)}{\sqrt{n_1}}W_f(n_1;|8d|,M)G\bfrac{n_1}{N_1}\Bigg|^2\Bigg)^{1/2} \\
& \times \sumd_{X<N_2} \Bigg(\sumstar_{(d,2Q)=1} F\bfrac{8d}{X} \Bigg| \sum_{n_2} \frac{\lambda_g(n_2)\chi_{8d}(n_2)}{\sqrt{n_2}}W_g(n_2;|8d|,M)G\bfrac{n_2}{N_2}\Bigg|^2\Bigg)^{1/2}.
\end{align*}

For the second factor, we have
\begin{align*}
S_2 & := \sumstar_{(d,2Q)=1} F\bfrac{8d}{X} \Bigg| \sum_{n} \frac{\lambda_g(n_2)\chi_{8d}(n)}{\sqrt{n}}W_g(n;|8d|,M)G\bfrac{n}{N}\Bigg|^2 \\
&  = \sumstar_{(d,2Q)=1} F\bfrac{8d}{X} \left| \int_{(3)}\int_{(0)} \gamma_g(u)  \frac{\bfrac{|8d|}{N}^u-\bfrac{M}{N}^u}{u^2}  \tilde{G}(s-u) N^s \sum_{n} \frac{\lambda_g(n)\chi_{8d}(n)}{n^{{1/2}+s}}V\bfrac{n}{N}\; dsdu\right|^2 \\
& \ll \bfrac{X}{N}^6 \int_{-\infty}^{\infty}\int_{-\infty}^{\infty} \frac{1}{|3+it_1|} \frac{1}{(1+|3+it_1|)^{20}} \frac{1}{(1+|-3+i(t_2-t_1)|)^{20}}  \; dt_2dt_1\\
& \times  \int_{-\infty}^{\infty}\int_{-\infty}^{\infty} \frac{1}{|3+it_1|} \frac{1}{(1+|3+it_1|)^{20}} \frac{1}{(1+|-3+i(t_2-t_1)|)^{20}}  \sum_{(d,2)=1}F\bfrac{8d}{X} \left|\sum_{n} \frac{\lambda_g(n)\chi_{8d}(n)}{n^{{1/2}+it_2}}V\bfrac{n}{N}\right|^2\; dt_2dt_1 \\
& \ll \bfrac{X}{N}^6 X.
\end{align*}

Since $\sumd_{X<N} \bfrac{X}{N}^3 \ll 1$ and the second factor is $\ll X^{1/2}$. Therefore, we conclude that
\begin{align*}
S_{(M<N_1\leq X, X<N_2)}, S_{(M<N_2\leq X, X<N_1)} & \ll X(\log\log X)^3, \\
S_{(X<N_1,X<N_2)} & \ll X.
\end{align*}


\begin{thebibliography}{9999}


\bibitem{GZ} B.H. Gross and D.B. Zagier, {\it Heegner points and derivatives of $L$-series}, Invent. Math. \textbf{84}(1986), no. 2, 225-320.

 \bibitem{IK} H. Iwaniec and E. Kowalski, {\it Analytic Number Theory}, vol. 53, American Mathematical Society Colloquium Publications, Rhode Island, 2004.

 \bibitem{V} V. A. Kolyvagin, {\it Finiteness of $E(\mathbb{Q})$ and $III(E,\mathbb{Q})$ for a subclass of Weil curves}, Izv. Akad.
Nauk SSSR Ser. Mat. \textbf{52}(1988), no. 3, 522-540, 670-671 (= Math. USSR- Izvestija \textbf{32}(1989), no. 3, 523-541).

 \bibitem{KMSS} S. Kumar, K. Mallesham, P. Sharma and S. Singh, {\it Moments of derivatives of modular $L$-functions}, Q.J.Math. \textbf{75}(2024), no.2, 715-734.

 \bibitem{Li} X. Li, \emph{Moments of quadratic twists of modular $L$-functions}, Invent. math. 237, 697-733 (2024).
 
 \bibitem{Petrow}  I. Petrow, \emph{Moments of $L^{\prime}(\frac{1}{2})$ in the family if quadratic twists},  IMRN \, no. 6, 1576--1612 (2014).
 
 \bibitem{Snonvanishing} K. Soundararajan, {\it Nonvanishing of quadratic Dirichlet L-functions at $s=\frac 12$.} Ann. of Math. (2) 152 (2000), no. 2, 447-488.
 
 \bibitem{SY} K. Soundararajan and M. Young, {\it The second moment of quadratic twists of modular L-functions.} J. Eur. Math. Soc. (JEMS) 12 (2010), no. 5, 1097-1116.
 
 \bibitem{Warner} Frank W. Warner, {\it Foundations of differentiable manifolds and Lie groups.} Corrected reprint of the 1971 edition. Graduate Texts in Mathematics, 94. Springer-Verlag, New York-Berlin, 1983.
 
 \end{thebibliography}
\end{document}